\newcommand{\abs}[1]{\left|#1\right|}
\newcommand{\bdry}[1]{\partial #1}
\newcommand{\C}[1]{$(\text{C})_{#1}$}
\newcommand{\A}{{\cal A}}
\newcommand{\F}{{\cal F}}
\newcommand{\closure}[1]{\overline{#1}}
\newcommand{\dint}{\ds{\int}}
\newcommand{\dist}[2]{\text{dist}\, (#1,#2)}
\newcommand{\ds}[1]{\displaystyle #1}
\newcommand{\eps}{\varepsilon}
\newcommand{\id}[1][]{id_{\, #1}}
\newcommand{\incl}{\subset}
\newcommand{\loc}{\text{loc}}
\newcommand{\M}{{\cal M}}
\newcommand{\N}{\mathbb N}
\newcommand{\norm}[2][]{\left\|#2\right\|_{#1}}
\renewcommand{\O}{\text{O}}
\renewcommand{\o}{\text{o}}
\newcommand{\pnorm}[2][]{\if #1'' \left|#2\right|_p \else \left|#2\right|_{#1} \fi}
\newcommand{\R}{\mathbb R}
\newcommand{\RP}{\R \text{P}}
\newcommand{\restr}[2]{\left.#1\right|_{#2}}
\newcommand{\seq}[1]{\left(#1\right)}
\newcommand{\set}[1]{\left\{#1\right\}}
\newcommand{\wstar}{\xrightarrow{w^\ast}}
\newcommand{\Z}{\mathbb Z}
\DeclareMathOperator{\supp}{supp}
\newenvironment{enumroman}{\begin{enumerate}

}{\end{enumerate}}
\newenvironment{properties}[1]{\begin{enumerate}

}{\end{enumerate}}
\newtheorem{lemma}{Lemma}[section]
\newtheorem{proposition}[lemma]{Proposition}
\newtheorem{theorem}[lemma]{Theorem}
\theoremstyle{remark}
\newtheorem{example}[lemma]{Example}
\numberwithin{equation}{section}
\title{\bf Asymmetric critical $p$-Laplacian problems\thanks{Supported by NSFC (11325107, 11271353, 11501252, 11571176) and the Natural Science Foundation of Jiangsu Province of China for Young Scholars (No. BK2012109).}}
\author{\bf Kanishka Perera\\
{\small Department of Mathematical Sciences, Florida Institute}\\
{\small of Technology, Melbourne, FL 32901, USA. Email: kperera@fit.edu}\\
[\bigskipamount]
\bf Yang Yang\\
{\small School of Science, Jiangnan University, Wuxi,}\\
{\small Jiangsu, 214122, China. Email: yynjnu@126.com}\\
[\bigskipamount]
\bf Zhitao Zhang\footnote{Corresponding author.}\\
{\small Academy of Mathematics and Systems Science, Chinese Academy}\\
{\small of Sciences, Beijing, 100190, China. Email: zzt@math.ac.cn}}
\date{}
\begin{document}

\maketitle

\begin{abstract}
We obtain nontrivial solutions for two types of critical $p$-Laplacian problems with asymmetric nonlinearities in a smooth bounded domain in $\R^N,\, N \ge 2$. For $p < N$, we consider an asymmetric problem involving the critical Sobolev exponent $p^\ast = Np/(N - p)$. In the borderline case $p = N$, we consider an asymmetric critical exponential nonlinearity of the Trudinger-Moser type. In the absence of a suitable direct sum decomposition, we use a linking theorem based on the $\Z_2$-cohomological index to obtain our solutions.\\
{\bf MSC2010:} Primary 35B33; Secondary 35J92, 35J20.\\
{\bf Key Words and Phrases:} $p$-Laplacian problems, asymmetric nonlinearities, critical Sobolev exponent, Trudinger-Moser inequality, linking, $\Z_2$-cohomological index.
\end{abstract}

\section{Introduction}

Beginning with the seminal paper of Ambrosetti and Prodi \cite{MR0320844}, elliptic boundary value problems with asymmetric nonlinearities have been extensively studied (see, e.g., Berger and Podolak \cite{MR0377274}, Kazdan and Warner \cite{MR0477445}, Dancer \cite{MR524624}, Amann and Hess \cite{MR549877}, and the references therein). More recently, Deng \cite{MR1137897}, de Figueiredo and Yang \cite{MR1758880}, Aubin and Wang \cite{MR1831984}, Calanchi and Ruf \cite{MR1938385}, and Zhang et al.\! \cite{MR2112476} have obtained interesting existence and multiplicity results for semilinear Ambrosetti-Prodi type problems with critical nonlinearities using variational methods.

In the present paper, first we consider the asymmetric critical $p$-Laplacian problem
\begin{equation} \label{1.1}
\left\{\begin{aligned}
- \Delta_p\, u & = \lambda\, |u|^{p-2}\, u + u_+^{p^\ast - 1} && \text{in } \Omega\\[10pt]
u & = 0 && \text{on } \bdry{\Omega},
\end{aligned}\right.
\end{equation}
where $\Omega$ is a smooth bounded domain in $\R^N,\, N \ge 2$, $1 < p < N$, $p^\ast = Np/(N - p)$ is the critical Sobolev exponent, $\lambda > 0$ is a constant, and $u_+(x) = \max \set{u(x),0}$. We recall that $\lambda \in \R$ is a Dirichlet eigenvalue of $- \Delta_p$ in $\Omega$ if the problem
\begin{equation} \label{1.2}
\left\{\begin{aligned}
- \Delta_p\, u & = \lambda\, |u|^{p-2}\, u && \text{in } \Omega\\[10pt]
u & = 0 && \text{on } \bdry{\Omega}
\end{aligned}\right.
\end{equation}
has a nontrivial solution. The first eigenvalue $\lambda_1(p)$ is positive, simple, and has an associated eigenfunction $\varphi_1$ that is positive in $\Omega$. Problem \eqref{1.1} has a positive solution when $N \ge p^2$ and $0 < \lambda < \lambda_1(p)$ (see Guedda and V{\'e}ron \cite{MR1009077}). When $\lambda = \lambda_1(p)$, $t \varphi_1$ is clearly a negative solution for any $t < 0$. Here we focus on the case $\lambda > \lambda_1(p)$. Our first result is the following.

\begin{theorem} \label{Theorem 1.1}
If $N \ge p^2$ and $\lambda > \lambda_1(p)$ is not an eigenvalue of $- \Delta_p$, then problem \eqref{1.1} has a nontrivial solution.
\end{theorem}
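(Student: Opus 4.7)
The plan is to obtain critical points of the $C^1$-functional
\[
I(u) = \frac{1}{p} \int_\Omega |\nabla u|^p - \frac{\lambda}{p} \int_\Omega |u|^p - \frac{1}{p^\ast} \int_\Omega u_+^{p^\ast}
\]
on $W^{1,p}_0(\Omega)$ via a linking minimax scheme based on the $\Z_2$-cohomological index. Three difficulties have to be handled simultaneously: the asymmetric nonlinearity breaks the $u \mapsto -u$ symmetry, so a Ljusternik-Schnirelman argument does not apply directly; $\lambda > \lambda_1(p)$ makes $I$ unbounded below; and the critical exponent destroys global Palais-Smale compactness above the threshold $c^\ast := S^{N/p}/N$, where $S$ is the best constant in the embedding $W^{1,p}_0(\Omega) \hookrightarrow L^{p^\ast}(\Omega)$.

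The compactness step is standard: a concentration-compactness analysis yields $(PS)_c$ for every $c < c^\ast$, and the asymmetry is in fact helpful because the nonlinearity $u_+^{p^\ast-1}$ only allows concentration along positive Talenti instantons, so a single bubble already carries the maximal compactness defect $c^\ast$. For the geometry, the hypothesis that $\lambda$ is not an eigenvalue and $\lambda > \lambda_1(p)$ places $\lambda$ in a spectral gap $\lambda_k(p) < \lambda < \lambda_{k+1}(p)$ of the variational eigenvalues of $-\Delta_p$ defined via the cohomological index. This gap furnishes two symmetric cones in $W^{1,p}_0(\Omega)$, namely $C_- = \set{u : \int |\nabla u|^p \le \lambda_k(p) \int |u|^p}$ and $C_+ = \set{u : \int |\nabla u|^p \ge \lambda_{k+1}(p) \int |u|^p}$, whose cohomological indices jump across the gap by one. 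By the intersection theorem for the $\Z_2$-index, a large ball $A$ in $C_-$ augmented by the ray $\set{t\, U_\eps : t \ge 0}$ of positive truncated Talenti instantons concentrated at a fixed point of $\Omega$ links with a small sphere $B$ inside $C_+$.

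On $B$ the gap inequality together with the Sobolev embedding yields $I \ge \alpha > 0$; on the $C_-$ part of $\partial A$ one has $\lambda \int |u|^p \ge \int |\nabla u|^p$, forcing $I \le 0$; and on the instanton cap $I \le 0$ for $t$ large by the supercritical decay $-t^{p^\ast} \int U_\eps^{p^\ast}$. The linking minimax value $c$ therefore satisfies $c \ge \alpha > 0$. The decisive step, and the main obstacle of the proof, is to show $c < c^\ast$. This is done by expanding $I(v + t\, U_\eps)$ via the Br\'ezis-Nirenberg instanton estimates and optimising over $v \in A \cap C_-$, $t \ge 0$ and $\eps > 0$. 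The hypothesis $N \ge p^2$ enters exactly here: it is the threshold at which the favourable term $-\lambda \|U_\eps\|_p^p$ overcomes the residual error coming from the truncation of the Talenti profile. For $N > p^2$ this is a clean polynomial comparison, while the borderline case $N = p^2$ requires the sharper logarithmic refinement already used in Guedda-V\'eron~\cite{MR1009077} and is the most delicate calculation. Once $c < c^\ast$ is secured, the $\Z_2$-cohomological linking principle together with $(PS)_c$ produces a critical point at level $c > 0$, which is a nontrivial weak solution of~\eqref{1.1}.
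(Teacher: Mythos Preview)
Your overall strategy matches the paper's: one places $\lambda$ in a gap $\lambda_k(p)<\lambda<\lambda_{k+1}(p)$ of the cohomological eigenvalues, builds a linking via Theorem~\ref{Theorem 1.5}, verifies \C{c} below the threshold $S^{N/p}/N$, and pushes the minimax level under the threshold using truncated Talenti profiles, with $N\ge p^2$ entering precisely in that last estimate. So the architecture is right.

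There is, however, a genuine gap in the step you describe as ``expanding $I(v+tU_\eps)$ via the Br\'ezis--Nirenberg instanton estimates and optimising over $v\in A\cap C_-$''. For $p\ne 2$ there is no orthogonal decomposition, and if $v$ is an arbitrary element of the sublevel cone $C_-$ the cross terms in $\|\nabla(v+tU_\eps)\|_p^p$, $\|v+tU_\eps\|_p^p$ and $\|(v+tU_\eps)_+\|_{p^\ast}^{p^\ast}$ cannot be separated or controlled to the required precision. The paper circumvents this by a support-splitting device: using the Degiovanni--Lancelotti result that $\Psi^{\lambda_k(p)}$ contains a compact symmetric set $E$ of index $k$ bounded in $L^\infty\cap C^{1,\alpha}_{\loc}$, one truncates each $v\in E$ outside a small ball $B_\delta(0)$ to obtain $E_\delta$, places the Talenti bubble $u_{\eps,\delta}$ inside $B_{\delta/2}(0)$, and checks (Lemma~\ref{Lemma 2.2}) that $E_\delta$ still has index $k$ while every $w\in E_\delta$ has support disjoint from that of $v_0=\pi(u_{\eps,\delta})$. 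With disjoint supports one gets exact additivity of all the relevant integrals, which is what makes the computation of $S_u(\lambda)$ and the bound $\sup\Phi(X)<S^{N/p}/N$ go through cleanly. Your proposal does not mention this construction, and without it the optimisation step as written does not close.

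Two smaller points you should also address explicitly. First, the compactness lemma: boundedness of a \C{c} sequence is not automatic here; one has to show that if $\|u_j\|\to\infty$ then the rescaled limit is a nonpositive eigenfunction, forcing $\lambda=\lambda_1(p)$, which is excluded (this is where the one-sidedness of $u_+^{p^\ast-1}$ is actually used). Second, the linking itself: to invoke the abstract theorem one needs $i(A_0)=i(\M\setminus B_0)=k$, and this equality comes from \eqref{1.5} together with $i(E_\delta)=k$; your cones $C_\pm$ alone do not give a finite-index compact set to cap off.
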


In the borderline case $p = N \ge 2$, critical growth is of exponential type and is governed by the Trudinger-Moser inequality
\begin{equation} \label{1.3}
\sup_{u \in W^{1,N}_0(\Omega),\; \norm{u} \le 1}\, \int_\Omega e^{\, \alpha_N\, |u|^{N'}} dx < \infty,
\end{equation}
where $\alpha_N = N \omega_{N-1}^{1/(N-1)}$, $\omega_{N-1}$ is the area of the unit sphere in $\R^N$, and $N' = N/(N - 1)$ (see Trudinger \cite{MR0216286} and Moser \cite{MR0301504}). A natural analog of problem \eqref{1.1} for this case is
\begin{equation} \label{1.4}
\left\{\begin{aligned}
- \Delta_N\, u & = \lambda\, |u|^{N-2}\, u\, e^{\, u_+^{N'}} && \text{in } \Omega\\[10pt]
u & = 0 && \text{on } \bdry{\Omega}.
\end{aligned}\right.
\end{equation}
A result of Adimurthi \cite{MR1079983} implies that this problem has a nonnegative and nontrivial solution when $0 < \lambda < \lambda_1(N)$ (see also do {\'O} \cite{MR1392090}). When $\lambda = \lambda_1(N)$, $t \varphi_1$ is again a negative solution for any $t < 0$. Our second result here is the following.

\begin{theorem} \label{Theorem 1.2}
If $N \ge 2$ and $\lambda > \lambda_1(N)$ is not an eigenvalue of $- \Delta_N$, then problem \eqref{1.4} has a nontrivial solution.
\end{theorem}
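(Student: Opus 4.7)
The plan is to apply the $\Z_2$-cohomological index linking theorem to the $C^1$ energy functional associated with \eqref{1.4}, namely
\[
I(u) = \frac{1}{N} \int_\Omega |\nabla u|^N\, dx - \lambda \int_\Omega F(u)\, dx, \quad u \in W_0^{1,N}(\Omega),
\]
where $F(u) = \int_0^u |s|^{N-2} s\, e^{s_+^{N'}}\, ds$. Since the exponential weight equals $1$ on $\set{u \le 0}$, a direct computation gives $F(u) = |u|^N/N$ there, so on the negative cone $I$ reduces to the indefinite Rayleigh-type functional $\frac{1}{N}\int|\nabla u|^N - \frac{\lambda}{N}\int|u|^N$, on which $I \to - \infty$ along $-\varphi_1$; on the positive cone the critical Trudinger-Moser growth \eqref{1.3} governs $F$. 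The overall scheme mirrors that of Theorem~\ref{Theorem 1.1}, with $p^\ast$-concentration replaced by Moser-type concentration.

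First I would invoke the variational spectrum $\set{\lambda_k(N)}$ of $-\Delta_N$ defined via the Fadell-Rabinowitz $\Z_2$-cohomological index $i$, and select $k \ge 1$ with $\lambda_k(N) < \lambda < \lambda_{k+1}(N)$; this is possible because $\lambda > \lambda_1(N)$ is not an eigenvalue. Associated with $k$ are symmetric subsets of the $L^N$-unit sphere controlling the Rayleigh quotient from above (with cohomological index $\ge k$) and from below (with complement of index $\le k$). From these I would assemble a linking pair $A, B \subset W_0^{1,N}(\Omega)$: a truncated negative cone $A$ of large radius $R$, on which $I \le 0$ via $\lambda > \lambda_k(N)$, and a complementary set $B$ concentrated on the positive side at a small $L^N$-scale, on which $I$ has a strictly positive infimum using $\lambda < \lambda_{k+1}(N)$ together with the subcritical behavior of $F$ near $0$.

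Next I would prove a local Palais-Smale condition: $(PS)_c$ holds for every $c$ below a threshold $c^\ast$ derived from \eqref{1.3}, of order $\frac{1}{N}(\alpha_N/\lambda)^{N-1}$ up to constants. Boundedness of any $(PS)_c$ sequence $\set{u_j}$ follows from testing the equation against $u_j$ and using that the negative part $(u_j)_-$ controls itself via the standard eigenvalue argument. For compactness, the bound on $\norm{u_j}$ just below the threshold forces $\alpha_N\, (u_j)_+^{N'}/\norm{u_j}^{N'}$ to remain strictly less than $\alpha_N$ along a subsequence, so \eqref{1.3} yields equi-integrability of $e^{(u_j)_+^{N'}}$, which permits passage to the limit in the nonlinear term and upgrades weak convergence to strong.

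The principal obstacle is verifying that the linking minimax level lies strictly below $c^\ast$. The strategy, following Adimurthi \cite{MR1079983}, is to test the linking with a sequence $\widetilde m_n$ of Moser concentration functions (normalized truncations of the fundamental solution of $-\Delta_N$) supported in a small ball about an interior point of $\Omega$, superposed with a nonnegative multiple of $-\varphi_1$ for the negative direction. Because $F(u) = |u|^N/N$ on $\set{u \le 0}$ and $\lambda > \lambda_1(N)$, that negative component strictly lowers $I$, providing the slack needed to force
\[
\sup_{t \ge 0,\; s \ge 0} I\bigl(t\, \widetilde m_n - s\, \varphi_1\bigr) < c^\ast
\]
for $n$ sufficiently large; carrying out this estimate precisely, and in particular isolating the leading $\log$ term in the Moser expansion so that it absorbs the $\lambda$-dependent factor in $c^\ast$, is the delicate heart of the proof. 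Once the strict inequality is secured, the cohomological linking theorem produces a critical point at level $c \in (0, c^\ast)$, which is necessarily nontrivial.
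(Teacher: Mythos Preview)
Your overall architecture is right---cohomological linking plus Moser concentration---but two structural pieces are missing or misstated.

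First, the compactness threshold is $c^\ast = \alpha_N^{N-1}/N$; no factor of $\lambda$ enters, since what one needs is eventually $\norm{u_j}^{N'} < \alpha_N$ in order to invoke \eqref{1.3}. More importantly, below this level one does \emph{not} in general upgrade to strong convergence. What the paper actually proves (Lemma~\ref{Lemma 3.2}) is only that any \C{c} sequence with $0 \ne c < \alpha_N^{N-1}/N$ has a subsequence converging \emph{weakly} to a nontrivial critical point; the argument that the limit is nonzero uses Trudinger--Moser, but no further concentration-compactness is carried out when the limit is nontrivial. The conclusion of Theorem~\ref{Theorem 1.2} is then reached by a dichotomy: either there is no \C{c} sequence (so the Cerami condition holds vacuously and Theorem~\ref{Theorem 1.5} yields a critical value), or there is one, and its weak limit is already the desired solution.

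Second, and more seriously, the sup estimate you propose, over $t\,\widetilde m_n - s\,\varphi_1$, does not match the linking geometry once $k \ge 2$. The set $A$ in Theorem~\ref{Theorem 1.5} is built from an index-$k$ subset $A_0$ of the sublevel set $\Psi^{\lambda_k(N)}$, so you must bound $\Phi(sw + t v_0)$ for \emph{every} $w \in A_0$, not just for $w = -\varphi_1$ (which by itself gives index $1$). The device that makes this tractable---and that your outline omits---is to cut a small hole $B_{r_m}(0)$ out of each function in $A_0$ (this perturbs $\Psi$ by only $\O(m^{-(N-1)})$ and preserves the index; see Lemma~\ref{Lemma 3.3}) and then place the Moser function $v_0 = \widetilde v_j$ \emph{inside} that hole. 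With disjoint supports one has $\Phi(sw + t v_0) = \Phi(sw) + \Phi(t v_0)$; the first summand is $\le 0$ because $\Psi(w) \le \lambda' < \lambda$, and the second is handled by the Adimurthi-type computation giving $\sup_{t \ge 0} \Phi(t\widetilde v_j) < \alpha_N^{N-1}/N$ for large $j$. Without this splitting, $w$ and the Moser bump interact nonlinearly through the exponential and the estimate does not go through.
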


These results complement those in \cite{MR1831984,MR1938385,MR1758880,MR1137897,MR2112476} concerning the semilinear case $p = 2$. However, the linking arguments based on eigenspaces of $- \Delta$ used in those papers do not apply to the quasilinear case $p \ne 2$ since the nonlinear operator $- \Delta_p$ does not have linear eigenspaces. Therefore we will use more general constructions based on sublevel sets as in Perera and Szulkin \cite{MR2153141}. Moreover, the standard sequence of eigenvalues of $- \Delta_p$ based on the genus does not provide sufficient information about the structure of the sublevel sets to carry out these linking constructions, so we will use a different sequence of eigenvalues introduced in Perera \cite{MR1998432} that is based on a cohomological index.

The $\Z_2$-cohomological index of Fadell and Rabinowitz \cite{MR57:17677} is defined as follows. Let $W$ be a Banach space and let $\A$ denote the class of symmetric subsets of $W \setminus \set{0}$. For $A \in \A$, let $\overline{A} = A/\Z_2$ be the quotient space of $A$ with each $u$ and $-u$ identified, let $f : \overline{A} \to \RP^\infty$ be the classifying map of $\overline{A}$, and let $f^\ast : H^\ast(\RP^\infty) \to H^\ast(\overline{A})$ be the induced homomorphism of the Alexander-Spanier cohomology rings. The cohomological index of $A$ is defined by
\[
i(A) = \begin{cases}
0 & \text{if } A = \emptyset,\\[5pt]
\sup \set{m \ge 1 : f^\ast(\omega^{m-1}) \ne 0} & \text{if } A \ne \emptyset,
\end{cases}
\]
where $\omega \in H^1(\RP^\infty)$ is the generator of the polynomial ring $H^\ast(\RP^\infty) = \Z_2[\omega]$.

\begin{example}
The classifying map of the unit sphere $S^{m-1}$ in $\R^m,\, m \ge 1$ is the inclusion $\RP^{m-1} \incl \RP^\infty$, which induces isomorphisms on the cohomology groups $H^q$ for $q \le m - 1$, so $i(S^{m-1}) = m$.
\end{example}

The following proposition summarizes the basic properties of this index.

\begin{proposition}[Fadell-Rabinowitz \cite{MR57:17677}]
The index $i : \A \to \N \cup \set{0,\infty}$ has the following properties:
\begin{properties}{i}
\item Definiteness: $i(A) = 0$ if and only if $A = \emptyset$.
\item Monotonicity: If there is an odd continuous map from $A$ to $B$ (in particular, if $A \subset B$), then $i(A) \le i(B)$. Thus, equality holds when the map is an odd homeomorphism.
\item Dimension: $i(A) \le \dim W$.
\item Continuity: If $A$ is closed, then there is a closed neighborhood $N \in \A$ of $A$ such that $i(N) = i(A)$. When $A$ is compact, $N$ may be chosen to be a $\delta$-neighborhood $N_\delta(A) = \set{u \in W : \dist{u}{A} \le \delta}$.
\item Subadditivity: If $A$ and $B$ are closed, then $i(A \cup B) \le i(A) + i(B)$.
\item Stability: If $SA$ is the suspension of $A \ne \emptyset$, obtained as the quotient space of $A \times [-1,1]$ with $A \times \set{1}$ and $A \times \set{-1}$ collapsed to different points, then $i(SA) = i(A) + 1$.
\item Piercing property: If $A$, $A_0$ and $A_1$ are closed, and $\varphi : A \times [0,1] \to A_0 \cup A_1$ is a continuous map such that $\varphi(-u,t) = - \varphi(u,t)$ for all $(u,t) \in A \times [0,1]$, $\varphi(A \times [0,1])$ is closed, $\varphi(A \times \set{0}) \subset A_0$ and $\varphi(A \times \set{1}) \subset A_1$, then $i(\varphi(A \times [0,1]) \cap A_0 \cap A_1) \ge i(A)$.
\item Neighborhood of zero: If $U$ is a bounded closed symmetric neighborhood of $0$, then $i(\bdry{U}) = \dim W$.
\end{properties}
\end{proposition}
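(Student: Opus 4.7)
The plan is to derive all eight properties from two ingredients: the functoriality, homotopy invariance, and tautness of Alexander--Spanier cohomology, together with the polynomial ring structure $H^*(\RP^\infty;\Z_2) = \Z_2[\omega]$. The underlying mechanism is that $\RP^\infty$ classifies free $\Z_2$-actions, so any odd continuous map $h : A \to B$ descends to $\overline h : \overline A \to \overline B$ and satisfies $f_B \circ \overline h \simeq f_A$; consequently $f_A^* = \overline h^* \circ f_B^*$, and the question of whether $i(A) \ge m$ reduces to whether the pullback $f_A^*(\omega^{m-1})$ is nonzero.

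The easier properties follow quickly from this framework. Definiteness holds because $f_A^*(\omega^0) = 1 \ne 0$ as soon as $\overline A$ is nonempty. Monotonicity is just the displayed factorization. Dimension then follows by applying Monotonicity to the inclusion $A \hookrightarrow W \setminus \set{0}$ composed with the equivariant radial retraction $u \mapsto u/\norm{u}$ onto the unit sphere $S(W)$: when $\dim W = n < \infty$ the Example gives $i(S(W)) = n$, while the infinite-dimensional case is vacuous. Neighborhood of zero is analogous: the Minkowski functional $p_U$ of a bounded closed symmetric neighborhood is a positive, continuous, even function on $W \setminus \set{0}$, and $u \mapsto u/p_U(u)$ is an odd homeomorphism $\bdry{U} \to S(W)$, so two applications of Monotonicity give $i(\bdry{U}) = \dim W$.

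For Continuity, one invokes the tautness of Alexander--Spanier cohomology: $\check H^*(\overline A)$ is the direct limit of $\check H^*(\overline N)$ as $N$ ranges over closed symmetric neighborhoods of $A$. Assuming $i(A) = m$ is finite, the vanishing of $f_A^*(\omega^m)$ therefore propagates to some such $N$, giving $i(N) \le m$; Monotonicity provides the reverse inequality, and when $A$ is compact the $\delta$-neighborhoods form a cofinal system. Subadditivity is the classical cup-length argument: with $a = i(A)$ and $b = i(B)$, vanishing of $f_A^*(\omega^a)$ lifts $\omega^a$ to a relative class $\tilde\omega_A \in H^a(\RP^\infty,\overline A)$, and similarly $\tilde\omega_B \in H^b(\RP^\infty, \overline B)$; their cup product lies in $H^{a+b}(\RP^\infty, \overline A \cup \overline B)$ and restricts to $\omega^{a+b}$, forcing $f_{A \cup B}^*(\omega^{a+b}) = 0$. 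Stability is obtained by comparing $\overline{SA}$ with the reduced suspension $\Sigma \overline A$ via a cofiber sequence, using that cup product by $\omega$ acts as a degree-one shift coming from the Thom class of the tautological line bundle over $\RP^\infty$.

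I expect the Piercing property to be the main obstacle, because no odd map from $A$ to $B := \varphi(A \times [0,1]) \cap A_0 \cap A_1$ is directly available: the piercing time at which $\varphi(u,\cdot)$ enters $A_0 \cap A_1$ need not depend continuously and oddly on $u$. My plan is to form the closed symmetric sets $A_j' := \varphi^{-1}(A_j) \subset A \times [0,1]$ for $j = 0, 1$, which cover $A \times [0,1]$ and contain $A \times \set{j}$ respectively. Since $A \times [0,1]$ equivariantly deformation retracts onto $A \times \set{1/2} \cong A$, we have $i(A \times [0,1]) = i(A)$, and a Mayer--Vietoris analysis of the cover $\set{A_0', A_1'}$, combined with the inequality $i(A_j') \ge i(A)$ coming from the inclusion $A \times \set{j} \subset A_j'$, forces $i(A_0' \cap A_1') \ge i(A)$. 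Finally, $\varphi$ restricts to an odd continuous map $A_0' \cap A_1' \to B$, and the assumed closedness of $\varphi(A \times [0,1])$ keeps $B$ closed; Monotonicity then delivers $i(B) \ge i(A)$.
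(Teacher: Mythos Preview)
The paper does not prove this proposition at all: it is stated with attribution to Fadell and Rabinowitz and then used as a black box, as is customary when the cohomological index is invoked as a tool. There is therefore no ``paper's own proof'' to compare your proposal against.

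That said, your outline is a faithful sketch of how these properties are established in the original source and in standard expositions. The one place where the sketch is thin is the piercing property. Your reduction to the preimages $A_j' = \varphi^{-1}(A_j)$ and the final use of monotonicity for $\varphi|_{A_0' \cap A_1'}$ are correct, but the sentence ``a Mayer--Vietoris analysis \dots\ combined with the inequality $i(A_j') \ge i(A)$ \dots\ forces $i(A_0' \cap A_1') \ge i(A)$'' hides the actual content: knowing only $i(A_0') = i(A_1') = i(A \times [0,1])$ does not by itself yield the desired lower bound on the intersection. One must use explicitly that the slice inclusions $A \times \{0\} \hookrightarrow A \times [0,1]$ and $A \times \{1\} \hookrightarrow A \times [0,1]$ are equivariant homotopy equivalences and then track the class $f^*(\omega^{m-1})$ through the Mayer--Vietoris (or relative pair/excision) sequence to see that it cannot vanish on $\overline{A_0' \cap A_1'}$. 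This is a matter of missing detail rather than a wrong strategy.
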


For $1 < p < \infty$, eigenvalues of problem \eqref{1.2} coincide with critical values of the functional
\[
\Psi(u) = \frac{1}{\dint_\Omega |u|^p\, dx}, \quad u \in \M = \set{u \in W^{1,p}_0(\Omega) : \int_\Omega |\nabla u|^p\, dx = 1}.
\]
Let $\F$ denote the class of symmetric subsets of $\M$ and set
\[
\lambda_k(p) := \inf_{M \in \F,\; i(M) \ge k}\, \sup_{u \in M}\, \Psi(u), \quad k \in \N.
\]
Then $0 < \lambda_1(p) < \lambda_2(p) \le \lambda_3(p) \le \cdots \to \infty$ is a sequence of eigenvalues of \eqref{1.2} and
\begin{equation} \label{1.5}
\lambda_k(p) < \lambda_{k+1}(p) \implies i(\Psi^{\lambda_k(p)}) = i(\M \setminus \Psi_{\lambda_{k+1}(p)}) = k,
\end{equation}
where $\Psi^a = \set{u \in \M : \Psi(u) \le a}$ and $\Psi_a = \set{u \in \M : \Psi(u) \ge a}$ for $a \in \R$ (see Perera et al.\! \cite[Propositions 3.52 and 3.53]{MR2640827}). As we will see, problems \eqref{1.1} and \eqref{1.4} have nontrivial solutions as long as $\lambda$ is not an eigenvalue from the sequence $\seq{\lambda_k(p)}$. This leaves an open question of existence of nontrivial solutions when $\lambda$ belongs to this sequence.

We will prove Theorems \ref{Theorem 1.1} and \ref{Theorem 1.2} using the following abstract critical point theorem proved in Yang and Perera \cite{YaPe2}, which generalizes the well-known linking theorem of Rabinowitz \cite{MR0488128}.

\begin{theorem} \label{Theorem 1.5}
Let $\Phi$ be a $C^1$-functional defined on a Banach space $W$ and let $A_0$ and $B_0$ be disjoint nonempty closed symmetric subsets of the unit sphere $S = \set{u \in W : \norm{u} = 1}$ such that
\[
i(A_0) = i(S \setminus B_0) < \infty.
\]
Assume that there exist $R > r > 0$ and $v \in S \setminus A_0$ such that
\[
\sup \Phi(A) \le \inf \Phi(B), \qquad \sup \Phi(X) < \infty,
\]
where
\begin{gather*}
A = \set{tu : u \in A_0,\, 0 \le t \le R} \cup \set{R\, \pi((1 - t)\, u + tv) : u \in A_0,\, 0 \le t \le 1},\\[10pt]
B = \set{ru : u \in B_0},\\[10pt]
X = \set{tu : u \in A,\, \norm{u} = R,\, 0 \le t \le 1},
\end{gather*}
and $\pi : W \setminus \set{0} \to S,\, u \mapsto u/\norm{u}$ is the radial projection onto $S$. Let $\Gamma = \{\gamma \in C(X,W) : \gamma(X) \text{ is closed and} \restr{\gamma}{A} = \id[\! A]\}$ and set
\[
c := \inf_{\gamma \in \Gamma}\, \sup_{u \in \gamma(X)}\, \Phi(u).
\]
Then
\begin{equation} \label{1.6}
\inf \Phi(B) \le c \le \sup \Phi(X),
\end{equation}
in particular, $c$ is finite. If, in addition, $\Phi$ satisfies the {\em \C{c}} condition, then $c$ is a critical value of $\Phi$.
\end{theorem}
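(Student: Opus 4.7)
The plan divides into three steps: a trivial upper bound from the identity, a linking-type intersection lemma giving the lower bound, and a deformation argument extracting the critical value.

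For the upper bound, $\id : X \to W$ is continuous, $X$ is closed, and $\restr{\id}{A} = \id[\! A]$, so $\id \in \Gamma$ and $c \le \sup \Phi(\id(X)) = \sup \Phi(X)$ immediately.

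The crux is the intersection property
\[
\gamma(X) \cap B \ne \emptyset \qquad \text{for every } \gamma \in \Gamma,
\]
which combined with $\sup \Phi(A) \le \inf \Phi(B)$ yields $c \ge \inf \Phi(B)$. I would argue by contradiction. Assuming $\gamma(X) \cap rB_0 = \emptyset$, form the symmetrized cone $\tilde X = X \cup (-X)$ and the odd extension $\tilde \gamma : \tilde X \to W$ defined by $\tilde \gamma(-x) = -\gamma(x)$ for $x \in X$; consistency on $X \cap (-X) \supset X_0 := \set{tu : u \in A_0, 0 \le t \le R}$ is automatic because $\restr{\gamma}{X_0} = \id$. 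Since $B_0$ is symmetric, also $\tilde \gamma(\tilde X) \cap rB_0 = \emptyset$, and the closed symmetric set $C := \tilde\gamma^{-1}(rS) \cap \tilde X$ is mapped by the odd continuous map $\tilde\gamma/r$ into $S \setminus B_0$; monotonicity of $i$ therefore gives $i(C) \le i(S \setminus B_0) = i(A_0)$. On the other hand, $C \cap X_0 = rA_0$, and the double-bridge structure of $\tilde X$---along the radial segments through the bridge cones from $0$ to $\pm Rv$, $\|\gamma\|$ runs continuously from $0$ to $R$ and hence meets $r$ by the intermediate value theorem---pierces $C$ so as to contain a closed symmetric subset homeomorphic to the suspension of $rA_0$. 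By the stability property $(i_6)$ and monotonicity, $i(C) \ge i(A_0) + 1$, contradicting the previous bound.

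Once \eqref{1.6} is established, suppose $c$ is a regular value of $\Phi$. By the quantitative deformation lemma for \C{c}, there exist $\eps > 0$ with $A \subset \Phi^{c - 2\eps}$ (possible since $\sup \Phi(A) \le \inf \Phi(B) \le c$) and a continuous deformation $\eta : W \to W$ satisfying $\eta(\Phi^{c+\eps}) \subset \Phi^{c-\eps}$ and $\restr{\eta}{\Phi^{c - 2\eps}} = \id$; $\eta$ may be chosen as a homeomorphism, so $\eta \circ \gamma \in \Gamma$ for any $\gamma \in \Gamma$ with $\sup \Phi(\gamma(X)) < c + \eps$, violating the definition of $c$. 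The principal obstacle is the intersection lemma: the asymmetric bridge through $Rv$ destroys the full $\Z_2$-symmetry of $X$, so one must carefully extract a suspension of $rA_0$ inside $\tilde\gamma^{-1}(rS) \cap \tilde X$, using the piercing property $(i_7)$ and the intermediate value theorem together with a continuous selection of piercing points; reconciling the off-by-one through the stability property $(i_6)$ is the technical heart of the argument.
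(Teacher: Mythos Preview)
The paper does not give an in-line proof of Theorem~1.5; immediately after the statement it simply remarks that the result is proved (under Palais--Smale) in Yang--Perera~\cite{YaPe2} and that the argument carries over verbatim to the Cerami setting because the first deformation lemma also holds under \C{c}. So there is no proof in the paper to compare against beyond that citation.

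Your outline follows the standard route for such cohomological linking theorems---an intersection lemma driven by the index, followed by a deformation argument---and is in the same spirit as the cited references. Two steps, however, are not airtight as written.

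\emph{Odd extension.} You define $\tilde\gamma$ on $\tilde X = X \cup (-X)$ by odd reflection and check consistency only on $X_0 \subset X \cap (-X)$. But well-definedness requires $\gamma(x) = -\gamma(-x)$ on \emph{all} of $X \cap (-X)$, and nothing in the hypotheses prevents the filled bridge $\{t\,\pi((1-s)u + sv) : u \in A_0,\ 0 < s \le 1,\ 0 < t < R\}$ from meeting its own negative in points lying outside $A$; on such points $\gamma$ is completely unconstrained, so $\tilde\gamma$ may fail to be a function. The clean fix (and what the cited proofs do) is to avoid the set-theoretic union and work with an abstract parametrization instead: build an odd continuous map $\varphi : A_0 \times [-1,1] \to W$ from $\gamma$ on $A_0 \times [0,1]$ and its odd reflection on $A_0 \times [-1,0]$, glued along $A_0 \times \{0\}$ where $\gamma$ restricts to the identity on $RA_0$. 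The piercing property $(i_7)$ then applies directly to $\varphi$, with no well-definedness issue, and yields a closed symmetric subset of $\gamma(X) \cup (-\gamma(X))$ on the sphere $rS$ of index at least $i(A_0)+1$ (via $(i_6)$), which maps oddly into $S \setminus B_0$---the contradiction you want.

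\emph{Deformation step.} To conclude that $\eta \circ \gamma \in \Gamma$ you need $\eta$ to fix $A$, hence $A \subset \Phi^{c-2\eps}$, i.e.\ $\sup \Phi(A) < c$. The hypotheses and the intersection lemma give only $\sup \Phi(A) \le \inf \Phi(B) \le c$. If equality holds throughout, $\eta$ may move points of $A$ and the argument collapses. In every application in this paper one has $\sup \Phi(A) \le 0 < \inf \Phi(B)$, so the issue never arises there, but as a proof of the abstract statement the edge case must be handled separately.
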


This theorem was stated and proved under the Palais-Smale compactness condition in \cite{YaPe2}, but the proof goes through unchanged since the first deformation lemma also holds under the Cerami condition (see, e.g., Perera et al.\! \cite[Lemma 3.7]{MR2640827}). The linking construction used in the proof has also been used in Perera and Szulkin \cite{MR2153141} to obtain nontrivial solutions of $p$-Laplacian problems with nonlinearities that cross an eigenvalue. A similar construction based on the notion of cohomological linking was given in Degiovanni and Lancelotti \cite{MR2371112}. See also Perera et al.\! \cite[Proposition 3.23]{MR2640827}.

\section{Proof of Theorem \ref{Theorem 1.1}}

Weak solutions of problem \eqref{1.1} coincide with critical points of the $C^1$-functional
\[
\Phi(u) = \int_\Omega \left[\frac{1}{p}\, \big(|\nabla u|^p - \lambda\, |u|^p\big) - \frac{1}{p^\ast}\, u_+^{p^\ast}\right] dx, \quad u \in W^{1,p}_0(\Omega).
\]
We recall that $\Phi$ satisfies the Cerami compactness condition at the level $c \in \R$, or the \C{c} condition for short, if every sequence $\seq{u_j} \subset W^{1,p}_0(\Omega)$ such that $\Phi(u_j) \to c$ and $\left(1 + \norm{u_j}\right) \Phi'(u_j) \to 0$, called a \C{c} sequence, has a convergent subsequence. Let
\begin{equation} \label{2.1}
S = \inf_{u \in W^{1,p}_0(\Omega) \setminus \set{0}}\, \frac{\dint_\Omega |\nabla u|^p\, dx}{\left(\dint_\Omega |u|^{p^\ast}\, dx\right)^{p/p^\ast}}
\end{equation}
be the best constant in the Sobolev inequality.

\begin{lemma} \label{Lemma 2.1}
If $\lambda \ne \lambda_1(p)$, then $\Phi$ satisfies the {\em \C{c}} condition for all $c < \dfrac{1}{N}\, S^{N/p}$.
\end{lemma}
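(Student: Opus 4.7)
The plan is to take a Cerami sequence $\seq{u_j}$ at level $c < S^{N/p}/N$, first prove it is bounded, and then extract a strongly convergent subsequence. Since $(1+\norm{u_j})\,\Phi'(u_j) \to 0$, testing against $u_j$ and against $(u_j)_\pm$ each produces $\o(1)$ on the right. Combining $p\,\Phi(u_j) - \langle \Phi'(u_j), u_j\rangle = (p/N)\int_\Omega (u_j)_+^{p^\ast}\, dx$ shows $\int_\Omega (u_j)_+^{p^\ast}\, dx \to Nc$, in particular bounded. Testing against $(u_j)_-$ annihilates the critical term and yields
\[
\int_\Omega |\nabla (u_j)_-|^p\, dx - \lambda \int_\Omega (u_j)_-^p\, dx = \o(1),
\]
and testing against $(u_j)_+$ gives the companion identity $\int_\Omega |\nabla (u_j)_+|^p\, dx - \lambda \int_\Omega (u_j)_+^p\, dx - \int_\Omega (u_j)_+^{p^\ast}\, dx = \o(1)$.

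Now suppose for contradiction that $\norm{u_j} \to \infty$, and set $v_j = u_j/\norm{u_j}$. Boundedness of $\int_\Omega (u_j)_+^{p^\ast}\, dx$ gives $(v_j)_+ \to 0$ in $L^{p^\ast}(\Omega)$; along a weakly convergent subsequence $v_j \rightharpoonup v$ in $W^{1,p}_0(\Omega)$, Rellich then forces $v \le 0$. Dividing the $(u_j)_+$ identity by $\norm{u_j}^p$ yields $\int_\Omega |\nabla (v_j)_+|^p\, dx \to 0$, so $\int_\Omega |\nabla (v_j)_-|^p\, dx \to 1$, and dividing the $(u_j)_-$ identity by $\norm{u_j}^p$ gives $\int_\Omega (v_j)_-^p\, dx \to 1/\lambda$, whence $v \not\equiv 0$. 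Dividing the Euler-Lagrange equation by $\norm{u_j}^{p-1}$, the rescaled critical term $\norm{u_j}^{p^\ast - p}\,(v_j)_+^{p^\ast - 1}$ has $W^{-1,p'}(\Omega)$-norm dominated by $\bigl(\int_\Omega (u_j)_+^{p^\ast}\, dx\bigr)^{(p^\ast-1)/p^\ast}\!/\norm{u_j}^{p-1} \to 0$, so $-\Delta_p v_j = \lambda\,|v_j|^{p-2} v_j + \o(1)$ in $W^{-1,p'}(\Omega)$. Testing against $v$ and combining $\int_\Omega |\nabla v_j|^p\, dx = 1$ with $\lambda \int_\Omega |v|^p\, dx = 1$ yields $\langle -\Delta_p v_j,\, v_j - v\rangle \to 0$; the $(S_+)$ property of $-\Delta_p$ then upgrades convergence to strong, and the limit shows $v$ weakly solves $-\Delta_p v = \lambda\,|v|^{p-2} v$. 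Being sign-definite and nontrivial, simplicity of $\lambda_1(p)$ forces $\lambda = \lambda_1(p)$, a contradiction.

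Given boundedness, $u_j \rightharpoonup u$ in $W^{1,p}_0(\Omega)$ along a subsequence; a routine passage to the limit ($(u_j)_+^{p^\ast - 1}$ bounded in $L^{(p^\ast)'}$, a.e.\ convergence) makes $u$ a critical point, so $\langle \Phi'(u),u\rangle = 0$ and $\Phi(u) = (1/N)\int_\Omega u_+^{p^\ast}\, dx \ge 0$. Writing $w_j = u_j - u$, Brezis-Lieb gives $\int_\Omega |\nabla u_j|^p\, dx = \int_\Omega |\nabla u|^p\, dx + \int_\Omega |\nabla w_j|^p\, dx + \o(1)$ and $\int_\Omega (u_j)_+^{p^\ast}\, dx = \int_\Omega u_+^{p^\ast}\, dx + \int_\Omega (w_j)_+^{p^\ast}\, dx + \o(1)$. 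Combining with $\Phi(u_j) \to c$ and $\langle \Phi'(u_j),u_j\rangle \to 0$ produces $\int_\Omega |\nabla w_j|^p\, dx - \int_\Omega (w_j)_+^{p^\ast}\, dx \to 0$ and $(1/N)\int_\Omega |\nabla w_j|^p\, dx \to c - \Phi(u)$. If the limit $\alpha$ of $\int_\Omega |\nabla w_j|^p\, dx$ is positive, then \eqref{2.1} forces $\alpha \ge S^{N/p}$, giving $c \ge \Phi(u) + S^{N/p}/N \ge S^{N/p}/N$, contradicting the hypothesis; hence $\alpha = 0$ and $u_j \to u$ strongly. The hard part is the boundedness step: the asymmetric cutoff $u_+^{p^\ast - 1}$ removes the critical term from any negative-part test, so the blow-up concentrates on a nontrivial nonpositive limit that must be identified as a weak eigenfunction --- this demands the delicate $W^{-1,p'}$ estimate on the rescaled critical nonlinearity together with the $(S_+)$ property, after which simplicity of $\lambda_1(p)$ closes the contradiction with $\lambda \ne \lambda_1(p)$.
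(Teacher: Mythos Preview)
Your proof is correct. The boundedness step follows the same outline as the paper's (test with $u_j$, isolate $\int (u_j)_+^{p^\ast}$, show $(u_j)_+$ is bounded, rescale, identify a sign-definite eigenfunction at $\lambda$); you are in fact more explicit than the paper about why the rescaled limit is a weak eigenfunction, invoking the $(S_+)$ property to upgrade weak to strong convergence before passing to the limit in $-\Delta_p$.

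The convergence step, however, is genuinely different. The paper applies Lions' concentration-compactness principle to the measures $|\nabla u_{j+}|^p\,dx$ and $u_{j+}^{p^\ast}\,dx$, uses cut-off test functions to show each atom would carry mass at least $S^{N/p}$, rules this out by $c<S^{N/p}/N$, concludes $\int u_{j+}^{p^\ast}\to\int u_+^{p^\ast}$, and then a single $(S_+)$ step gives strong convergence of the full sequence. You instead use the Brezis--Lieb splitting for both $\int |\nabla u_j|^p$ and $\int (u_j)_+^{p^\ast}$, combine with $\Phi(u)\ge 0$ and the Sobolev inequality to force $\int |\nabla w_j|^p\to 0$. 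Your route is shorter and avoids the measure-theoretic machinery, but it carries a hidden cost: to apply Brezis--Lieb to gradients and to identify $u$ as a critical point you need $\nabla u_j\to\nabla u$ a.e., which is not automatic from weak convergence in $W^{1,p}_0$. This is available (e.g.\ by the Boccardo--Murat a.e.-gradient lemma, since $-\Delta_p u_j$ is bounded in $W^{-1,p'}$), but it is a nontrivial ingredient that your phrase ``routine passage to the limit'' passes over. The paper's concentration-compactness argument never needs a.e.\ gradient convergence as a separate input; everything is settled by the final $(S_+)$ step. Also note that the Brezis--Lieb identity you write for $(u_j)_+^{p^\ast}$ is the generalized version for $G(t)=t_+^{p^\ast}$ rather than the classical $|t|^{p^\ast}$ form; this is fine since $|G'(t)|\le p^\ast |t|^{p^\ast-1}$, but it is worth stating.
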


\begin{proof}
Let $c < \dfrac{1}{N}\, S^{N/p}$ and let $\seq{u_j}$ be a \C{c} sequence. First we show that $\seq{u_j}$ is bounded. We have
\begin{equation} \label{2.2}
\int_\Omega \left[\frac{1}{p}\, \big(|\nabla u_j|^p - \lambda\, |u_j|^p\big) - \frac{1}{p^\ast}\, u_{j+}^{p^\ast}\right] dx = c + \o(1)
\end{equation}
and
\begin{equation} \label{2.3}
\int_\Omega \big(|\nabla u_j|^{p-2}\, \nabla u_j \cdot \nabla v - \lambda\, |u_j|^{p-2}\, u_j\, v - u_{j+}^{p^\ast - 1}\, v\big)\, dx = \frac{\o(1) \norm{v}}{1 + \norm{u_j}}, \quad \forall v \in W^{1,p}_0(\Omega).
\end{equation}
Taking $v = u_j$ in \eqref{2.3} and combining with \eqref{2.2} gives
\begin{equation} \label{2.4}
\int_\Omega u_{j+}^{p^\ast}\, dx = Nc + \o(1),
\end{equation}
and taking $v = u_{j+}$ in \eqref{2.3} gives
\[
\int_\Omega |\nabla u_{j+}|^p\, dx = \int_\Omega \big(\lambda\, u_{j+}^p + u_{j+}^{p^\ast}\big)\, dx + \o(1),
\]
so $\seq{u_{j+}}$ is bounded in $W^{1,p}_0(\Omega)$. Suppose $\rho_j := \norm{u_j} \to \infty$ for a renamed subsequence. Then $\tilde{u}_j := u_j/\rho_j$ converges to some $\tilde{u}$ weakly in $W^{1,p}_0(\Omega)$, strongly in $L^q(\Omega)$ for $1 \le q < p^\ast$, and a.e.\! in $\Omega$ for a further subsequence. Since the sequence $\seq{u_{j+}}$ is bounded, dividing \eqref{2.2} by $\rho_j^p$ and \eqref{2.3} by $\rho_j^{p-1}$, and passing to the limit then gives
\[
1 = \lambda \int_\Omega |\tilde{u}|^p\, dx, \qquad \int_\Omega |\nabla \tilde{u}|^{p-2}\, \nabla \tilde{u} \cdot \nabla v\, dx = \lambda \int_\Omega |\tilde{u}|^{p-1}\, \tilde{u}\, v\, dx \quad \forall v \in W^{1,p}_0(\Omega),
\]
respectively. Moreover, since $\tilde{u}_{j+} = u_{j+}/\rho_j \to 0$, $\tilde{u} \le 0$ a.e. Hence $\tilde{u} = t \varphi_1$ for some $t < 0$ and $\lambda = \lambda_1(p)$, contrary to assumption.

Since $\seq{u_j}$ is bounded, so is $\seq{u_{j+}}$, a renamed subsequence of which then converges to some $v \ge 0$ weakly in $W^{1,p}_0(\Omega)$, strongly in $L^q(\Omega)$ for $1 \le q < p^\ast$ and a.e.\! in $\Omega$, and
\begin{equation} \label{2.5}
|\nabla u_{j+}|^p\, dx \wstar \mu, \qquad u_{j+}^{p^\ast}\, dx \wstar \nu
\end{equation}
in the sense of measures, where $\mu$ and $\nu$ are bounded nonnegative measures on $\closure{\Omega}$ (see, e.g., Folland \cite{MR1681462}). By the concentration compactness principle of Lions \cite{MR834360,MR850686}, then there exist an at most countable index set $I$ and points $x_i \in \closure{\Omega},\, i \in I$ such that
\begin{equation} \label{2.6}
\mu \ge |\nabla v|^p\, dx + \sum_{i \in I} \mu_i\, \delta_{x_i}, \qquad \nu = v^{p^\ast}\, dx + \sum_{i \in I} \nu_i\, \delta_{x_i},
\end{equation}
where $\mu_i, \nu_i > 0$ and $\nu_i^{p/p^\ast} \le \mu_i/S$. Let $\varphi : \R^N \to [0,1]$ be a smooth function such that $\varphi(x) = 1$ for $|x| \le 1$ and $\varphi(x) = 0$ for $|x| \ge 2$. Then set
\[
\varphi_{i,\rho}(x) = \varphi\left(\frac{x - x_i}{\rho}\right), \quad x \in \R^N
\]
for $i \in I$ and $\rho > 0$, and note that $\varphi_{i,\rho} : \R^N \to [0,1]$ is a smooth function such that $\varphi_{i,\rho}(x) = 1$ for $|x - x_i| \le \rho$ and $\varphi_{i,\rho}(x) = 0$ for $|x - x_i| \ge 2 \rho$. The sequence $\seq{\varphi_{i,\rho}\, u_{j+}}$ is bounded in $W^{1,p}_0(\Omega)$ and hence taking $v = \varphi_{i,\rho}\, u_{j+}$ in \eqref{2.3} gives
\begin{equation} \label{2.7}
\int_\Omega \big(\varphi_{i,\rho}\, |\nabla u_{j+}|^p + u_{j+}\, |\nabla u_{j+}|^{p-2}\, \nabla u_{j+} \cdot \nabla \varphi_{i,\rho} - \lambda\, \varphi_{i,\rho}\, u_{j+}^p - \varphi_{i,\rho}\, u_{j+}^{p^\ast}\big)\, dx = \o(1).
\end{equation}
By \eqref{2.5},
\[
\int_\Omega \varphi_{i,\rho}\, |\nabla u_{j+}|^p\, dx \to \int_\Omega \varphi_{i,\rho}\, d\mu, \qquad \int_\Omega \varphi_{i,\rho}\, u_{j+}^{p^\ast}\, dx \to \int_\Omega \varphi_{i,\rho}\, d\nu.
\]
Denoting by $C$ a generic positive constant independent of $j$ and $\rho$,
\[
\abs{\int_\Omega \big(u_{j+}\, |\nabla u_{j+}|^{p-2}\, \nabla u_{j+} \cdot \nabla \varphi_{i,\rho} - \lambda\, \varphi_{i,\rho}\, u_{j+}^p\big)\, dx} \le C \left(\frac{I_j^{1/p}}{\rho} + I_j\right),
\]
where
\[
I_j := \int_{\Omega \cap B_{2 \rho}(x_i)} u_{j+}^p\, dx \to \int_{\Omega \cap B_{2 \rho}(x_i)} v^p\, dx \le C \rho^p \left(\int_{\Omega \cap B_{2 \rho}(x_i)} v^{p^\ast}\, dx\right)^{p/p^\ast}.
\]
So passing to the limit in \eqref{2.7} gives
\[
\int_\Omega \varphi_{i,\rho}\, d\mu - \int_\Omega \varphi_{i,\rho}\, d\nu \le C \left[\left(\int_{\Omega \cap B_{2 \rho}(x_i)} v^{p^\ast}\, dx\right)^{1/p^\ast} + \int_{\Omega \cap B_{2 \rho}(x_i)} v^p\, dx\right].
\]
Letting $\rho \searrow 0$ and using \eqref{2.6} now gives $\mu_i \le \nu_i$, which together with $\nu_i^{p/p^\ast} \le \mu_i/S$ then gives $\nu_i = 0$ or $\nu_i \ge S^{N/p}$. Passing to the limit in \eqref{2.4} and using \eqref{2.5} and \eqref{2.6} gives $\nu_i \le Nc < S^{N/p}$, so $\nu_i = 0$. Hence $I = \emptyset$ and
\begin{equation} \label{2.8}
\int_\Omega u_{j+}^{p^\ast}\, dx \to \int_\Omega v^{p^\ast}\, dx.
\end{equation}

Passing to a further subsequence, $u_j$ converges to some $u$ weakly in $W^{1,p}_0(\Omega)$, strongly in $L^q(\Omega)$ for $1 \le q < p^\ast$, and a.e.\! in $\Omega$. Since
\[
|u_{j+}^{p^\ast - 1}\, (u_j - u)| \le u_{j+}^{p^\ast} + u_{j+}^{p^\ast - 1}\, |u| \le \left(2 - \frac{1}{p^\ast}\right) u_{j+}^{p^\ast} + \frac{1}{p^\ast}\, |u|^{p^\ast}
\]
by Young's inequality,
\[
\int_\Omega u_{j+}^{p^\ast - 1}\, (u_j - u)\, dx \to 0
\]
by \eqref{2.8} and the dominated convergence theorem. Then $u_j \to u$ in $W^{1,p}_0(\Omega)$ by a standard argument.
\end{proof}

We recall that the infimum in \eqref{2.1} is attained by the family of functions
\[
u_\eps(x) = \frac{C_{N,p}\, \eps^{-(N-p)/p}}{\left[1 + \left(\dfrac{|x|}{\eps}\right)^{p/(p-1)}\right]^{(N-p)/p}}, \quad \eps > 0
\]
when $\Omega = \R^N$, where $C_{N,p} > 0$ is chosen so that
\[
\int_{\R^N} |\nabla u_\eps|^p\, dx = \int_{\R^N} u_\eps^{p^\ast}\, dx = S^{N/p}.
\]
Take a smooth function $\eta : [0,\infty) \to [0,1]$ such that $\eta(s) = 1$ for $s \le 1/4$ and $\eta(s) = 0$ for $s \ge 1/2$, and set
\[
u_{\eps,\delta}(x) = \eta\!\left(\frac{|x|}{\delta}\right) u_\eps(x), \quad \eps, \delta > 0.
\]
We have the well-known estimates
\begin{gather}
\label{2.9} \int_{\R^N} |\nabla u_{\eps,\delta}|^p\, dx \le S^{N/p} + C \left(\frac{\eps}{\delta}\right)^{(N-p)/(p-1)},\\[10pt]
\label{2.10} \int_{\R^N} u_{\eps,\delta}^{p^\ast}\, dx \ge S^{N/p} - C \left(\frac{\eps}{\delta}\right)^{N/(p-1)},\\[10pt]
\label{2.11} \int_{\R^N} u_{\eps,\delta}^p\, dx \ge \begin{cases}
\dfrac{\eps^p}{C} - C \delta^p \left(\dfrac{\eps}{\delta}\right)^{(N-p)/(p-1)} & \text{if } N > p^2\\[10pt]
\dfrac{\eps^p}{C}\, \log\!\left(\dfrac{\delta}{\eps}\right) - C \eps^p & \text{if } N = p^2,
\end{cases}
\end{gather}
where $C = C(N,p) > 0$ is a constant (see, e.g., Degiovanni and Lancelotti \cite{MR2514055}).

Let $i$, $\M$, $\Psi$, and $\lambda_k(p)$ be as in the introduction, and suppose that $\lambda_k(p) < \lambda_{k+1}(p)$. Then the sublevel set $\Psi^{\lambda_k(p)}$ has a compact symmetric subset $E$ of index $k$ that is bounded in $L^\infty(\Omega) \cap C^{1,\alpha}_\loc(\Omega)$ (see \cite[Theorem 2.3]{MR2514055}). We may assume without loss of generality that $0 \in \Omega$. Let $\delta_0 = \dist{0}{\bdry{\Omega}}$, take a smooth function $\theta : [0,\infty) \to [0,1]$ such that $\theta(s) = 0$ for $s \le 3/4$ and $\theta(s) = 1$ for $s \ge 1$, set
\[
v_\delta(x) = \theta\!\left(\frac{|x|}{\delta}\right) v(x), \quad v \in E,\, 0 < \delta \le \frac{\delta_0}{2},
\]
and let $E_\delta = \set{\pi(v_\delta) : v \in E}$, where $\pi : W^{1,p}_0(\Omega) \setminus \set{0} \to \M,\, u \mapsto u/\norm{u}$ is the radial projection onto $\M$.

\begin{lemma} \label{Lemma 2.2}
There exists a constant $C = C(N,p,\Omega,k) > 0$ such that for all sufficiently small $\delta > 0$,
\begin{enumroman}
\item $\Psi(w) \le \lambda_k(p) + C \delta^{N-p} \quad \forall w \in E_\delta$,
\item \label{Lemma 2.2.ii} $E_\delta \cap \Psi_{\lambda_{k+1}(p)} = \emptyset$,
\item \label{Lemma 2.2.iii} $i(E_\delta) = k$,
\item \label{Lemma 2.2.iv} $\supp w \cap \supp \pi(u_{\eps,\delta}) = \emptyset \quad \forall w \in E_\delta$,
\item \label{Lemma 2.2.v} $\pi(u_{\eps,\delta}) \notin E_\delta$.
\end{enumroman}
\end{lemma}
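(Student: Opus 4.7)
\medskip

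\noindent\textbf{Proof plan for Lemma \ref{Lemma 2.2}.}
The key observation is that the two cutoffs $\theta$ and $\eta$ are designed to produce disjoint supports: $u_{\eps,\delta}$ is supported in $\overline{B_{\delta/2}(0)}$ while $v_\delta$ vanishes on $B_{3\delta/4}(0)$. So I would dispose of \ref{Lemma 2.2.iv} and \ref{Lemma 2.2.v} first. Part \ref{Lemma 2.2.iv} is immediate from the support locations of $\theta(|\cdot|/\delta)$ and $\eta(|\cdot|/\delta)$. For \ref{Lemma 2.2.v}, if $\pi(u_{\eps,\delta})$ were an element of $E_\delta$, then \ref{Lemma 2.2.iv} applied to $w = \pi(u_{\eps,\delta})$ would force $\pi(u_{\eps,\delta})$ to have empty support, contradicting $\pi(u_{\eps,\delta}) \in \M$.

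For \ref{Lemma 2.2.ii}, \ref{Lemma 2.2.iii}, the plan is to first prove the quantitative estimate in (i) and then deduce the other two. To prove (i), write $\Psi(\pi(v_\delta)) = \int_\Omega |\nabla v_\delta|^p\, dx\, \big/ \int_\Omega |v_\delta|^p\, dx$ since $\Psi$ is scaling-invariant. Using that $E$ is bounded in $L^\infty(\Omega) \cap C^{1,\alpha}_{\loc}(\Omega)$, uniformly for $v \in E$ one has $|v|\le M$ and $|\nabla v|\le M$ on $B_{\delta_0/2}(0)$. Since $v_\delta = v$ outside $B_\delta$ and $\nabla v_\delta = \theta(|x|/\delta)\nabla v + \delta^{-1}\theta'(|x|/\delta)\, \widehat{x}\, v$ on $B_\delta\setminus B_{3\delta/4}$, a direct estimate on $B_\delta$ gives
\[
\int_\Omega |\nabla v_\delta|^p\, dx \le \int_\Omega |\nabla v|^p\, dx + C\delta^{N-p} = 1 + C\delta^{N-p},
\qquad
\int_\Omega |v_\delta|^p\, dx \ge \int_\Omega |v|^p\, dx - C\delta^N \ge \frac{1}{\lambda_k(p)} - C\delta^N,
\]
where $C$ depends only on $N,p,\Omega$, and the uniform bounds on $E$. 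Taking the quotient and expanding yields $\Psi(\pi(v_\delta)) \le \lambda_k(p) + C\delta^{N-p}$, which is (i). Part (ii) is then immediate: since $\lambda_k(p) < \lambda_{k+1}(p)$ and $N > p$, for $\delta$ small enough we have $\lambda_k(p) + C\delta^{N-p} < \lambda_{k+1}(p)$, so no $w \in E_\delta$ lies in $\Psi_{\lambda_{k+1}(p)}$.

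For \ref{Lemma 2.2.iii}, the map $\Phi_\delta : E \to \M,\, v \mapsto \pi(v_\delta)$ is continuous and odd (well-defined once $v_\delta \ne 0$, which holds uniformly for small $\delta$ by the estimate on $\int |v_\delta|^p$ above). By monotonicity of the index, $i(E_\delta) \ge i(E) = k$. For the reverse inequality, the same $W^{1,p}_0$-estimate as in (i) gives $\|v - v_\delta\| \le C\delta^{(N-p)/p} \to 0$ uniformly in $v\in E$, hence $\pi(v_\delta) \to v$ uniformly. By the continuity property of the index applied to the compact set $E$, there is a closed symmetric $\delta'$-neighborhood $N_{\delta'}(E) \in \A$ with $i(N_{\delta'}(E)) = k$; for $\delta$ small enough $E_\delta \subset N_{\delta'}(E)$, so $i(E_\delta) \le k$ by monotonicity.

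The one technical point I would watch for is the uniformity of the constants $C$ in $v \in E$, which relies on the $L^\infty \cap C^{1,\alpha}_{\loc}$-bound cited from \cite{MR2514055} together with compactness of $E$; this is what makes the exponent $N-p$ rather than just $o(1)$ work out, and is also what delivers a $\delta$ small enough to make all five conclusions hold simultaneously.
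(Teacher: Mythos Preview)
Your proof is correct, and for parts (i), (ii), (iv), and (v) it is essentially identical to the paper's argument: the support analysis for (iv)--(v), the numerator/denominator estimates for (i), and the choice of $\delta$ small to get (ii) are all handled the same way.

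The one genuine difference is in the upper bound $i(E_\delta)\le k$ for part (iii). You obtain it by showing $\|v - v_\delta\| \le C\delta^{(N-p)/p}\to 0$ uniformly in $v\in E$, hence $\pi(v_\delta)\to v$ uniformly, and then invoking the continuity property $(i_4)$ of the index to trap $E_\delta$ inside a $\delta'$-neighborhood of the compact set $E$ with the same index. The paper instead uses the structural fact \eqref{1.5}: since (ii) gives $E_\delta \subset \M \setminus \Psi_{\lambda_{k+1}(p)}$, monotonicity and $i(\M\setminus\Psi_{\lambda_{k+1}(p)})=k$ immediately yield $i(E_\delta)\le k$. The paper's route is shorter and avoids the extra uniform-convergence estimate; it also highlights the role of \eqref{1.5}, which reappears in the proof of Theorem~\ref{Theorem 1.1} when checking that $A_0$ and $B_0$ satisfy $i(A_0)=i(\M\setminus B_0)$. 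Your route is more self-contained in that it does not rely on \eqref{1.5}, using only compactness of $E$ and general index properties.
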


\begin{proof}
Let $v \in E$ and let $w = \pi(v_\delta)$. We have
\[
\int_\Omega |\nabla v_\delta|^p\, dx \le \int_{\Omega \setminus B_\delta(0)} |\nabla v|^p\, dx + C \int_{B_\delta(0)} \left(|\nabla v|^p + \frac{|v|^p}{\delta^p}\right) dx \le 1 + C \delta^{N-p}
\]
since $E \subset \M$ is bounded in $C^1(B_{\delta_0/2}(0))$, and
\[
\int_\Omega |v_\delta|^p\, dx \ge \int_{\Omega \setminus B_\delta(0)} |v|^p\, dx = \int_\Omega |v|^p\, dx - \int_{B_\delta(0)} |v|^p\, dx \ge \frac{1}{\lambda_k(p)} - C \delta^N
\]
since $E \subset \Psi^{\lambda_k(p)}$, so
\[
\Psi(w) = \frac{\dint_\Omega |\nabla v_\delta|^p\, dx}{\dint_\Omega |v_\delta|^p\, dx} \le \lambda_k(p) + C \delta^{N-p}
\]
if $\delta > 0$ is sufficiently small. Taking $\delta$ so small that $\lambda_k(p) + C \delta^{N-p} < \lambda_{k+1}(p)$ then gives \ref{Lemma 2.2.ii}. Since $E_\delta \subset \M \setminus \Psi_{\lambda_{k+1}(p)}$ by \ref{Lemma 2.2.ii},
\[
i(E_\delta) \le i(\M \setminus \Psi_{\lambda_{k+1}(p)}) = k
\]
by the monotonicity of the index and \eqref{1.5}. On the other hand, since $E \to E_\delta,\, v \mapsto \pi(v_\delta)$ is an odd continuous map,
\[
i(E_\delta) \ge i(E) = k.
\]
So $i(E_\delta) = k$.

Since $\supp w = \supp v_\delta \subset \Omega \setminus B_{3 \delta/4}(0)$ and $\supp \pi(u_{\eps,\delta}) = \supp u_{\eps,\delta} \subset \closure{B_{\delta/2}(0)}$, \ref{Lemma 2.2.iv} is clear, and \ref{Lemma 2.2.v} is immediate from \ref{Lemma 2.2.iv}.
\end{proof}

We are now ready to prove Theorem \ref{Theorem 1.1}.

\begin{proof}[\bf Proof of Theorem \ref{Theorem 1.1}]
We have $\lambda_k(p) < \lambda < \lambda_{k+1}(p)$ for some $k \in \N$. Fix $\lambda_k(p) < \lambda' < \lambda$ and $\delta > 0$ so small that the conclusions of Lemma \ref{Lemma 2.2} hold with $\lambda_k(p) + C \delta^{N-p} \le \lambda'$, in particular,
\begin{equation} \label{2.12}
\Psi(w) \le \lambda' \quad \forall w \in E_\delta.
\end{equation}
Then take $A_0 = E_\delta$ and $B_0 = \Psi_{\lambda_{k+1}(p)}$, and note that $A_0$ and $B_0$ are disjoint nonempty closed symmetric subsets of $\M$ such that
\[
i(A_0) = i(\M \setminus B_0) = k
\]
by Lemma \ref{Lemma 2.2} \ref{Lemma 2.2.iii} and \eqref{1.5}. Now let $R > r > 0$, let $v_0 = \pi(u_{\eps,\delta})$, which is in $\M \setminus E_\delta$ by Lemma \ref{Lemma 2.2} \ref{Lemma 2.2.v}, and let $A$, $B$, and $X$ be as in Theorem \ref{Theorem 1.5}.

For $u \in \Psi_{\lambda_{k+1}(p)}$,
\[
\Phi(ru) \ge \frac{1}{p} \left(1 - \frac{\lambda}{\lambda_{k+1}(p)}\right) r^p - \frac{1}{p^\ast\, S^{p^\ast/p}}\, r^{p^\ast}
\]
by \eqref{2.1}. Since $\lambda < \lambda_{k+1}(p)$, it follows that $\inf \Phi(B) > 0$ if $r$ is sufficiently small. Next we show that $\Phi \le 0$ on $A$ if $R$ is sufficiently large. For $w \in E_\delta$ and $t \ge 0$,
\[
\Phi(tw) \le \frac{t^p}{p} \left(1 - \frac{\lambda}{\Psi(w)}\right) \le - \frac{t^p}{p} \left(\frac{\lambda}{\lambda'} - 1\right) \le 0
\]
by \eqref{2.12}. Now let $0 \le t \le 1$ and set $u = \pi((1 - t)\, w + tv_0)$. Since
\[
\norm{(1 - t)\, w + tv_0} \le (1 - t) \norm{w} + t \norm{v_0} = 1
\]
and since the supports of $w$ and $v_0 \ge 0$ are disjoint by Lemma \ref{Lemma 2.2} \ref{Lemma 2.2.iv},
\[
\pnorm{u}^p = \frac{\pnorm{(1 - t)\, w + tv_0}^p}{\norm{(1 - t)\, w + tv_0}^p} \ge (1 - t)^p \pnorm{w}^p + t^p \pnorm{v_0}^p \ge \frac{(1 - t)^p}{\Psi(w)} \ge \frac{(1 - t)^p}{\lambda'}
\]
by \eqref{2.12}, and
\[
\pnorm[p^\ast]{u_+}^{p^\ast} = \frac{\pnorm[p^\ast]{[(1 - t)\, w + tv_0]_+}^{p^\ast}}{\norm{(1 - t)\, w + tv_0}^{p^\ast}} \ge (1 - t)^{p^\ast} \pnorm[p^\ast]{w_+}^{p^\ast} + t^{p^\ast} \pnorm[p^\ast]{v_0}^{p^\ast} \ge t^{p^\ast}\, \frac{\pnorm[p^\ast]{u_{\eps,\delta}}^{p^\ast}}{\norm{u_{\eps,\delta}}^{p^\ast}} \ge \frac{t^{p^\ast}}{C}
\]
by \eqref{2.9} and \eqref{2.10} if $\eps$ is sufficiently small, where $C = C(N,p,\Omega,k) > 0$. Then
\[
\Phi(Ru) = \frac{R^p}{p} \norm{u}^p - \frac{\lambda R^p}{p} \pnorm{u}^p - \frac{R^{p^\ast}}{p^\ast} \pnorm[p^\ast]{u_+}^{p^\ast} \le - \frac{1}{p} \left[\frac{\lambda}{\lambda'}\, (1 - t)^p - 1\right] R^p - \frac{t^{p^\ast}}{C}\, R^{p^\ast}.
\]
The last expression is clearly nonpositive if $t \le 1 - (\lambda'/\lambda)^{1/p} =: t_0$. For $t > t_0$, it is nonpositive if $R$ is sufficiently large.

Now we show that $\sup \Phi(X) < \dfrac{1}{N}\, S^{N/p}$ if $\eps$ is sufficiently small. Noting that
\[
X = \set{\rho\, \pi((1 - t)\, w + tv_0) : w \in E_\delta,\, 0 \le t \le 1,\, 0 \le \rho \le R},
\]
let $w \in E_\delta$, let $0 \le t \le 1$, and set $u = \pi((1 - t)\, w + tv_0)$. Then
\[
\sup_{0 \le \rho \le R}\, \Phi(\rho u) \le \sup_{\rho \ge 0}\, \left[\frac{\rho^p}{p}\, \big(1 - \lambda \pnorm{u}^p\big) - \frac{\rho^{p^\ast}}{p^\ast} \pnorm[p^\ast]{u_+}^{p^\ast}\right] = \frac{1}{N}\, S_u(\lambda)^{N/p}
\]
when $1 - \lambda \pnorm{u}^p > 0$, where
\begin{multline*}
S_u(\lambda) = \frac{1 - \lambda \pnorm{u}^p}{\pnorm[p^\ast]{u_+}^p} = \frac{\norm{(1 - t)\, w + tv_0}^p - \lambda \pnorm{(1 - t)\, w + tv_0}^p}{\pnorm[p^\ast]{[(1 - t)\, w + tv_0]_+}^p}\\[10pt]
= \frac{(1 - t)^p\, \big(\norm{w}^p - \lambda \pnorm{w}^p\big) + t^p\, \big(\norm{v_0}^p - \lambda \pnorm{v_0}^p\big)}{\left[(1 - t)^{p^\ast} \pnorm[p^\ast]{w_+}^{p^\ast} + t^{p^\ast} \pnorm[p^\ast]{v_0}^{p^\ast}\right]^{p/p^\ast}}.
\end{multline*}
Since $\norm{w}^p - \lambda \pnorm{w}^p = 1 - \lambda/\Psi(w) \le 0$ by \eqref{2.12},
\[
S_u(\lambda) \le \frac{1 - \lambda \pnorm{v_0}^p}{\pnorm[p^\ast]{v_0}^p} = \frac{\norm{u_{\eps,\delta}}^p - \lambda \pnorm{u_{\eps,\delta}}^p}{\pnorm[p^\ast]{u_{\eps,\delta}}^p} \le \begin{cases}
S - \dfrac{\eps^p}{C} + C \eps^{(N-p)/(p-1)} & \text{if } N > p^2\\[10pt]
S - \dfrac{\eps^p}{C}\, |\!\log \eps| + C \eps^p & \text{if } N = p^2
\end{cases}
\]
by \eqref{2.9}--\eqref{2.11}. In both cases the last expression is strictly less than $S$ if $\eps$ is sufficiently small.

The inequalities \eqref{1.6} now imply that $0 < c < \dfrac{1}{N}\, S^{N/p}$. Then $\Phi$ satisfies the \C{c} condition by Lemma \ref{Lemma 2.1} and hence $c$ is a critical value of $\Phi$ by Theorem \ref{Theorem 1.5}.
\end{proof}

\section{Proof of Theorem \ref{Theorem 1.2}}

Weak solutions of problem \eqref{1.4} coincide with critical points of the $C^1$-functional
\[
\Phi(u) = \int_\Omega \left[\frac{1}{N}\, |\nabla u|^N - \lambda\, F(u)\right] dx, \quad u \in W^{1,N}_0(\Omega),
\]
where
\[
F(t) = \int_0^t |s|^{N-2}\, s\, e^{\, s_+^{N'}} ds.
\]
First we obtain some estimates for the primitive $F$.

\begin{lemma}
For all $t \in \R$,
\begin{gather}
\label{3.1} F(t) \le \frac{t_+^N}{2N}\, e^{\, t_+^{N'}} + \frac{t_-^N}{N} + C,\\[10pt]
\label{3.2} F(t) \le |t|^{N-1}\, e^{\, t_+^{N'}} + \frac{t_-^N}{N} + C,
\end{gather}
where $C$ denotes a generic positive constant and $t_- = \max \set{-t,0}$.
\end{lemma}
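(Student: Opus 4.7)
The plan is to split on the sign of $t$ and, for $t \ge 0$, reduce each inequality to the statement that an explicit continuous function of $t$ is bounded below on $[0,\infty)$.

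For $t \le 0$ the exponential factor collapses since $s_+ = 0$ on $[t,0]$, and a direct computation gives $F(t) = t_-^N/N$. The right-hand sides of \eqref{3.1} and \eqref{3.2} already contain the summand $t_-^N/N$, so both inequalities hold trivially in this range (with any $C \ge 0$).

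For $t \ge 0$ one has $F(t) = \int_0^t s^{N-1} e^{s^{N'}}\, ds$, and \eqref{3.1}, \eqref{3.2} reduce to the finiteness of $-\inf_{t\ge 0} G_i$ for
\[
G_1(t) := \frac{t^N}{2N}\, e^{t^{N'}} - F(t), \qquad G_2(t) := t^{N-1}\, e^{t^{N'}} - F(t).
\]
Since each $G_i$ is continuous on $[0,\infty)$ with $G_i(0)=0$, it suffices to show $G_i(t) \to +\infty$ as $t \to \infty$. Differentiating and collecting terms yields
\[
G_1'(t) = \frac{t^{N-1} e^{t^{N'}}}{2}\!\left[\frac{t^{N'}}{N-1} - 1\right], \qquad G_2'(t) = t^{N-2} e^{t^{N'}}\bigl[(N-1) + N' t^{N'} - t\bigr].
\]
In both brackets the dominant term for large $t$ is the $t^{N'}$ term (recall $N' > 1$), so each bracket is eventually positive and tends to $+\infty$; hence $G_i'(t) \to +\infty$, which forces $G_i(t) \to +\infty$. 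The infimum of $G_i$ is therefore attained and finite, and taking $C$ to be the maximum of $-\inf G_1$ and $-\inf G_2$ yields both inequalities.

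The only step needing any real care is the derivative computation itself — one must be slightly attentive to how the polynomial and exponential factors combine to give the clean common-factor forms above — but once the expressions for $G_i'$ are written down, the sign analysis is immediate and no separate asymptotic analysis of $F$ is required.
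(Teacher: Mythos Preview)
Your proof is correct and takes a different route from the paper's. For $t>0$ the paper proceeds by integration by parts: for \eqref{3.1} one writes
\[
F(t)=\frac{t^N}{N}\,e^{t^{N'}}-\frac{N'}{N}\int_0^t s^{N+N'-1}e^{s^{N'}}\,ds,
\]
and for $t\ge (N/N')^{1/N'}$ the subtracted integral dominates $F(t)-F((N/N')^{1/N'})$, yielding $2F(t)\le \tfrac{t^N}{N}e^{t^{N'}}+C$ directly. For \eqref{3.2} a different integration by parts (against $s^{N'-1}e^{s^{N'}}$) gives $F(t)=\tfrac{t^{N-N'}}{N'}e^{t^{N'}}-(\text{positive})$, with the case $N=2$ treated by the explicit formula $F(t)=(e^{t^2}-1)/2$. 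Your approach instead analyses the difference functions $G_i$ via their derivatives and uses only that a continuous function on $[0,\infty)$ with $G_i(0)=0$ and $G_i\to+\infty$ is bounded below. The advantage of your argument is uniformity: no case split on $N$, no separate appeal to the boundedness of $F$ on compact sets, and the two inequalities are handled in exactly the same way. The paper's computation, on the other hand, makes visible where the factor $2$ in \eqref{3.1} comes from and gives explicit thresholds beyond which the inequalities hold with $C=0$, which can be useful if one later needs quantitative control.
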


\begin{proof}
For $t \le 0$, $F(t) = |t|^N/N$. For $t > 0$, integrating by parts gives
\[
F(t) = \int_0^t s^{N-1}\, e^{\, s^{N'}} ds = \frac{t^N}{N}\, e^{\, t^{N'}} - \frac{N'}{N} \int_0^t s^{N+N'-1}\, e^{\, s^{N'}} ds.
\]
For $t \ge (N/N')^{1/N'}$, the last term is greater than or equal to
\[
\frac{N'}{N} \int_{(N/N')^{1/N'}}^t s^{N+N'-1}\, e^{\, s^{N'}} ds \ge \int_{(N/N')^{1/N'}}^t s^{N-1}\, e^{\, s^{N'}} ds = F(t) - F((N/N')^{1/N'})
\]
and hence
\[
2F(t) \le \frac{t^N}{N}\, e^{\, t^{N'}} + F((N/N')^{1/N'}).
\]
Since $F$ is bounded on bounded sets, \eqref{3.1} follows. As for \eqref{3.2}, $F(t) = (e^{t^2} - 1)/2$ for $t > 0$ if $N = 2$, and
\[
F(t) = \frac{t^{N-N'}}{N'}\, e^{\, t^{N'}} - \frac{N - N'}{N'} \int_0^t s^{N-N'-1}\, e^{\, s^{N'}} ds \le t^{N-1}\, e^{\, t^{N'}}
\]
for $t \ge 1/(N')^{1/(N'-1)}$ if $N \ge 3$.
\end{proof}

Proof of Theorem \ref{Theorem 1.2} will be based on the following lemma.

\begin{lemma} \label{Lemma 3.2}
If $\lambda \ne \lambda_1(N)$ and $0 \ne c < \alpha_N^{N-1}/N$, then every {\em \C{c}} sequence has a subsequence that converges weakly to a nontrivial critical point of $\Phi$.
\end{lemma}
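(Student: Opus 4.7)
The plan is to verify three properties of any $(C)_c$ sequence $(u_j) \subset W^{1,N}_0(\Omega)$: boundedness, weak convergence (up to a subsequence) to a critical point $u$ of $\Phi$, and nontriviality of $u$. Boundedness and the critical-point property rely only on $\lambda \ne \lambda_1(N)$; the level bound $c < \alpha_N^{N-1}/N$ is decisive only for nontriviality.

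\emph{Boundedness.} Integration by parts in the definition of $F$ yields the pointwise identity
\[
N F(t) = |t|^N\, e^{t_+^{N'}} - N'\, G(t_+), \qquad G(s) := \int_0^s \tau^{N+N'-1}\, e^{\tau^{N'}}\, d\tau,
\]
so that $N\Phi(u_j) - \langle \Phi'(u_j), u_j\rangle = \lambda N' \int_\Omega G((u_j)_+)\, dx = Nc + o(1)$. Since $G(s) \sim s^N e^{s^{N'}}/N'$ as $s \to \infty$, this delivers the a priori bound $\int_\Omega (u_j)_+^N\, e^{(u_j)_+^{N'}}\, dx \le C$, and testing $\Phi'(u_j)$ against $(u_j)_+$ bounds $\|(u_j)_+\|$. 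To bound $(u_j)_-$, I mimic the blow-up argument of Lemma~\ref{Lemma 2.1}: if $\|u_j\| \to \infty$ then $\|(u_j)_-\| \to \infty$; setting $w_j := (u_j)_-/\|(u_j)_-\|$, the exponential factor equals $1$ on $\{u_j < 0\}$ (where the negative part concentrates) and the bounded positive part contributes negligibly after dividing the weak formulation by $\|(u_j)_-\|^{N-1}$, so the weak limit $w \ge 0$ weakly satisfies $-\Delta_N w = \lambda\, w^{N-1}$; testing $\Phi'(u_j)$ against $(u_j)_-$ itself yields $\int_\Omega w^N\, dx = 1/\lambda$, so $w \not\equiv 0$, forcing $\lambda = \lambda_1(N)$ contrary to hypothesis.

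\emph{Critical point.} Extract a subsequence with $u_j \rightharpoonup u$ in $W^{1,N}_0$, $u_j \to u$ in every $L^q$ ($q < \infty$) and a.e. The bound from the previous step, together with the de la Vall\'ee Poussin criterion applied via the excess factor $(u_j)_+ \to \infty$ at infinity, gives equi-integrability of $(u_j)_+^{N-1}\, e^{(u_j)_+^{N'}}$; combined with a.e.\ convergence and Vitali's theorem, this yields $L^1$-convergence of the nonlinearity $|u_j|^{N-2}\, u_j\, e^{(u_j)_+^{N'}}\, v$ for any $v \in C_c^\infty(\Omega) \subset L^\infty$. Writing $-\Delta_N u_j = \lambda\, B(u_j) + \Phi'(u_j)$ with $\lambda B(u_j) \to \lambda B(u)$ in $L^1$ and $\Phi'(u_j) \to 0$ in $W^{-1,N'}$, a Boccardo-Murat-type argument provides $\nabla u_j \to \nabla u$ a.e., so dominated convergence allows passage to the limit in the $N$-Laplacian term; this identifies $u$ as a weak solution on $C_c^\infty$, and Trudinger-Moser integrability of the right-hand side for the fixed limit $u$ extends the identity to $W^{1,N}_0$ by density, giving $\Phi'(u) = 0$.

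\emph{Nontriviality and main obstacle.} Suppose $u \equiv 0$. The equi-integrability from the previous step forces $\int_\Omega F(u_j)\, dx \to 0$, so $\Phi(u_j) = c + o(1)$ gives $\|u_j\|^N \to Nc$. The hypothesis $Nc < \alpha_N^{N-1}$ then places $\|(u_j)_+\|^N$ strictly below the Trudinger-Moser threshold for $j$ large, so \eqref{1.3} yields some $r > 1$ with $\int_\Omega e^{r\, (u_j)_+^{N'}}\, dx \le C$. H\"older's inequality with $u_j \to 0$ in $L^{Nr'}$ drives $\int_\Omega |u_j|^N\, e^{(u_j)_+^{N'}}\, dx \to 0$, and $\langle \Phi'(u_j), u_j\rangle = o(1)$ then forces $\|u_j\|^N \to 0$, contradicting $Nc > 0$. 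The main technical obstacle lies in the critical-point step, where the Boccardo-Murat passage must be executed without higher-integrability of the exponential nonlinearity; the level bound $c < \alpha_N^{N-1}/N$ enters only in the final contradiction, precisely because it is the energy of one Trudinger-Moser concentration bubble, and subcriticality rules out the only mechanism by which a weak limit could vanish.
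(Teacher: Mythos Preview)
Your proposal is correct and follows the same three-step skeleton as the paper's proof: bound $(u_j)_+$ via the energy identity, rule out blow-up of $(u_j)_-$ using $\lambda\ne\lambda_1(N)$, pass to the limit to get a critical point, and use the sub-threshold level $c<\alpha_N^{N-1}/N$ together with Trudinger--Moser to exclude $u=0$. The differences are essentially packaging. Where the paper invokes the compactness lemma of de~Figueiredo--Miyagaki--Ruf to pass to the limit in the exponential term and then writes ``passing to the limit in \eqref{3.4}'' without further comment on the $N$-Laplacian side, you phrase the first step via de~la~Vall\'ee~Poussin/Vitali and make the second step explicit through a Boccardo--Murat argument to get $\nabla u_j\to\nabla u$ a.e. This is a genuine gain in rigor: weak convergence of $u_j$ alone does not yield $\int|\nabla u_j|^{N-2}\nabla u_j\cdot\nabla v\to\int|\nabla u|^{N-2}\nabla u\cdot\nabla v$, and the paper leaves this standard step implicit. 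Your integration-by-parts identity $NF(t)=|t|^N e^{t_+^{N'}}-N'G(t_+)$ is exactly the computation behind the paper's estimate \eqref{3.1}, so the boundedness arguments coincide.
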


\begin{proof}
Let $\lambda \ne \lambda_1(N)$, let $0 \ne c < \alpha_N^{N-1}/N$, and let $\seq{u_j}$ be a \C{c} sequence. First we show that $\seq{u_j}$ is bounded. We have
\begin{equation} \label{3.3}
\int_\Omega \left[\frac{1}{N}\, |\nabla u_j|^N - \lambda\, F(u_j)\right] dx = c + \o(1)
\end{equation}
and
\begin{equation} \label{3.4}
\int_\Omega \left(|\nabla u_j|^{N-2}\, \nabla u_j \cdot \nabla v - \lambda\, |u_j|^{N-2}\, u_j\, e^{\, u_{j+}^{N'}}\, v\right) dx = \frac{\o(1) \norm{v}}{1 + \norm{u_j}} \quad \forall v \in W^{1,N}_0(\Omega),
\end{equation}
in particular,
\begin{equation} \label{3.5}
\int_\Omega \left(|\nabla u_j|^N - \lambda\, |u_j|^N\, e^{\, u_{j+}^{N'}}\right) dx = \o(1).
\end{equation}
Combining \eqref{3.5} with \eqref{3.3} and \eqref{3.1} gives
\begin{equation} \label{3.6}
\int_\Omega u_{j+}^N\, e^{\, u_{j+}^{N'}}\, dx \le C,
\end{equation}
and taking $v = u_{j+}$ in \eqref{3.4} gives
\[
\int_\Omega |\nabla u_{j+}|^N\, dx = \lambda \int_\Omega u_{j+}^N\, e^{\, u_{j+}^{N'}}\, dx + \o(1),
\]
so the sequence $\seq{u_{j+}}$ is bounded in $W^{1,N}_0(\Omega)$. Passing to a subsequence, $u_{j+}$ then converges to some $\hat{u} \ge 0$ weakly in $W^{1,N}_0(\Omega)$, strongly in $L^q(\Omega)$ for $1 \le q < \infty$, and a.e.\! in $\Omega$. Then for any $v \in C^\infty_0(\Omega)$,
\begin{equation} \label{3.7}
\int_\Omega u_{j+}^{N-1}\, e^{\, u_{j+}^{N'}}\, v\, dx \to \int_\Omega \hat{u}^{N-1}\, e^{\, \hat{u}^{N'}} v\, dx
\end{equation}
by de Figueiredo et al.\! \cite[Lemma 2.1]{MR1386960} and \eqref{3.6}. Now suppose $\rho_j := \norm{u_{j-}} \to \infty$. Then $\tilde{u}_j := u_{j-}/\rho_j$ converges to some $\tilde{u} \ge 0$ weakly in $W^{1,N}_0(\Omega)$, strongly in $L^q(\Omega)$ for $1 \le q < \infty$, and a.e.\! in $\Omega$ for a further subsequence. Taking $v = u_{j-}$ in \eqref{3.4}, dividing by $\rho_j^N$, and passing to the limit then gives
\[
1 = \lambda \int_\Omega \tilde{u}^N\, dx,
\]
so $\tilde{u} \ne 0$. Since the sequence $\seq{u_{j+}}$ is bounded, dividing \eqref{3.4} by $\rho_j^{N-1}$ gives
\[
\int_\Omega |\nabla \tilde{u}_j|^{N-2}\, \nabla \tilde{u}_j \cdot \nabla v\, dx = \lambda \int_\Omega \tilde{u}_j^{N-1}\, v\, dx - \frac{\lambda}{\rho_j^{N-1}} \int_\Omega u_{j+}^{N-1}\, e^{\, u_{j+}^{N'}}\, v\, dx + \o(1),
\]
and passing to the limit using \eqref{3.7} gives
\[
\int_\Omega |\nabla \tilde{u}|^{N-2}\, \nabla \tilde{u} \cdot \nabla v\, dx = \lambda \int_\Omega \tilde{u}^{N-1}\, v\, dx \quad \forall v \in C^\infty_0(\Omega).
\]
This then holds for all $v \in W^{1,N}_0(\Omega)$ by density, so $\tilde{u} = t \varphi_1$ for some $t > 0$ and $\lambda = \lambda_1(N)$, contrary to assumption.

Since the sequence $\seq{u_j}$ is bounded, a renamed subsequence converges to some $u$ weakly in $W^{1,N}_0(\Omega)$, strongly in $L^q(\Omega)$ for $1 \le q < \infty$, and a.e.\! in $\Omega$. Since $\int_\Omega |u_j|^N\, e^{\, u_{j+}^{N'}}\, dx$ is bounded by \eqref{3.5}, then for any $v \in C^\infty_0(\Omega)$,
\[
\int_\Omega |u_j|^{N-2}\, u_j\, e^{\, u_{j+}^{N'}}\, v\, dx \to \int_\Omega |u|^{N-2}\, u\, e^{\, u_+^{N'}} v\, dx
\]
by de Figueiredo et al.\! \cite[Lemma 2.1]{MR1386960}. So passing to the limit in \eqref{3.4} gives
\[
\int_\Omega \left(|\nabla u|^{N-2}\, \nabla u \cdot \nabla v - \lambda\, |u|^{N-2}\, u\, e^{\, u_+^{N'}} v\right) dx = 0.
\]
This then holds for all $v \in W^{1,N}_0(\Omega)$ by density, so $u$ is a critical point of $\Phi$.

Suppose $u = 0$. Then
\[
\int_\Omega |u_j|^{N-1}\, e^{\, u_{j+}^{N'}}\, dx \to 0
\]
by de Figueiredo et al.\! \cite[Lemma 2.1]{MR1386960} as above, and hence
\[
\int_\Omega F(u_j)\, dx \to 0
\]
by \eqref{3.2} and the dominated convergence theorem, so
\[
\int_\Omega |\nabla u_j|^N\, dx \to Nc
\]
by \eqref{3.3}. Since $c < \alpha_N^{N-1}/N$, then $\limsup\, \norm{u_j} < \alpha_N^{1/N'}$, so there exists $\beta > 1/\alpha_N^{1/N'}$ such that $\beta \norm{u_j} \le 1$ for all sufficiently large $j$. For $1 < \gamma < \infty$ given by $1/\alpha_N \beta^{N'} + 1/\gamma = 1$, then
\[
\int_\Omega |u_j|^N\, e^{\, u_{j+}^{N'}}\, dx \le \left(\int_\Omega |u_j|^{\gamma N}\, dx\right)^{1/\gamma} \left(\int_\Omega e^{\, \alpha_N\, (\beta u_{j+})^{N'}} dx\right)^{1/\alpha_N \beta^{N'}} \to 0
\]
since $u_j \to 0$ in $L^{\gamma N}(\Omega)$ and the last integral is bounded by \eqref{1.3}. Then $u_j \to 0$ in $W^{1,N}_0(\Omega)$ by \eqref{3.5}, so $\Phi(u_j) \to 0$, contradicting $c \ne 0$.
\end{proof}

Let $i$, $\M$, $\Psi$, and $\lambda_k(N)$ be as in the introduction, and suppose that $\lambda_k(N) < \lambda_{k+1}(N)$. Then the sublevel set $\Psi^{\lambda_k(N)}$ has a compact symmetric subset $E$ of index $k$ that is bounded in $L^\infty(\Omega) \cap C^{1,\alpha}_\loc(\Omega)$ (see Degiovanni and Lancelotti \cite[Theorem 2.3]{MR2514055}). We may assume without loss of generality that $0 \in \Omega$. For all $m \in \N$ so large that $B_{2/m}(0) \subset \Omega$, let
\[
\eta_m(x) = \begin{cases}
0 & \text{if } |x| \le 1/2\, m^{m+1}\\[5pt]
2\, m^m \left(|x| - \dfrac{1}{2\, m^{m+1}}\right) & \text{if } 1/2\, m^{m+1} < |x| \le 1/m^{m+1}\\[10pt]
(m\, |x|)^{1/m} & \text{if } 1/m^{m+1} < |x| \le 1/m\\[10pt]
1 & \text{if } |x| > 1/m,
\end{cases}
\]
set
\[
v_m(x) = \eta_m(x)\, v(x), \quad v \in E,
\]
and let $E_m = \set{\pi(v_m) : v \in E}$, where $\pi : W^{1,N}_0(\Omega) \setminus \set{0} \to \M,\, u \mapsto u/\norm{u}$ is the radial projection onto $\M$.

\begin{lemma} \label{Lemma 3.3}
There exists a constant $C = C(N,\Omega,k) > 0$ such that for all sufficiently large $m$,
\begin{enumroman}
\item $\Psi(w) \le \lambda_k(N) + \dfrac{C}{m^{N-1}} \quad \forall w \in E_m$,
\item \label{Lemma 3.3.ii} $E_m \cap \Psi_{\lambda_{k+1}(N)} = \emptyset$,
\item \label{Lemma 3.3.iii} $i(E_m) = k$.
\end{enumroman}
\end{lemma}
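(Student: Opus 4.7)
The plan is to mimic the proof of Lemma \ref{Lemma 2.2}, with the specific truncation $\eta_m$ replacing the scaled bump $\theta(|x|/\delta)$. The only substantive computation is the bound on $\int_\Omega |\nabla \eta_m|^N\, dx$, which I expect to be $O(1/m^{N-1})$ and will produce the stated rate in (i).

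First I would estimate $\int_\Omega |\nabla \eta_m|^N\, dx$ by splitting the support of $\nabla \eta_m$ into the two nontrivial regions. On $\set{1/2m^{m+1} < |x| \le 1/m^{m+1}}$ we have $|\nabla \eta_m| = 2m^m$ and the measure of this annulus is $O(1/m^{N(m+1)})$, contributing $O(1/m^N)$. On $\set{1/m^{m+1} < |x| \le 1/m}$ we have $|\nabla \eta_m(x)| = (m|x|)^{1/m - 1}$, and a spherical-coordinate computation (substitute $u = mr$) gives $\int |\nabla \eta_m|^N\, dx = \omega_{N-1}/(N m^{N-1})\, [1 - 1/m^N]$, which is the dominant $O(1/m^{N-1})$ term. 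This is precisely the scale that makes the construction work: it matches the reciprocal of the volume growth needed for the sharp estimate.

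Next, for any $v \in E$, since $E$ is bounded in $L^\infty(\Omega) \cap C^1(B_{\delta_0/2}(0))$, I would write
\[
\int_\Omega |\nabla v_m|^N\, dx \le \int_\Omega |\nabla v|^N\, dx + 2^{N-1}\int_{|x| \le 1/m} \bigl(\eta_m^N\, |\nabla v|^N + |v|^N\, |\nabla \eta_m|^N\bigr)\, dx - \int_{|x| \le 1/m} |\nabla v|^N\, dx,
\]
where $v_m = \eta_m v$. The $|\nabla v|^N$ piece contributes $O(1/m^N)$ by $C^1$-boundedness of $E$ on a neighborhood of $0$, while the $|\nabla \eta_m|^N$ piece is $O(1/m^{N-1})$ by the previous paragraph and boundedness of $|v|$. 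Since $\int_\Omega |\nabla v|^N\, dx = 1$, this gives $\int_\Omega |\nabla v_m|^N\, dx \le 1 + C/m^{N-1}$. Similarly,
\[
\int_\Omega |v_m|^N\, dx \ge \int_{|x| > 1/m} |v|^N\, dx \ge \frac{1}{\lambda_k(N)} - \frac{C}{m^N},
\]
using $E \subset \Psi^{\lambda_k(N)}$ and boundedness of $|v|$. Dividing yields $\Psi(\pi(v_m)) \le \lambda_k(N) + C/m^{N-1}$ for $m$ large, proving (i). Part (ii) is immediate: pick $m$ so large that $\lambda_k(N) + C/m^{N-1} < \lambda_{k+1}(N)$.

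For (iii) I would argue exactly as in Lemma \ref{Lemma 2.2} \ref{Lemma 2.2.iii}: part (ii) gives $E_m \subset \M \setminus \Psi_{\lambda_{k+1}(N)}$, hence $i(E_m) \le i(\M \setminus \Psi_{\lambda_{k+1}(N)}) = k$ by monotonicity and \eqref{1.5}; conversely, the map $E \to E_m,\, v \mapsto \pi(v_m)$ is odd and continuous (the nonvanishing $v_m \ne 0$ is ensured for large $m$ by the lower bound on $\int |v_m|^N\, dx$), so $i(E_m) \ge i(E) = k$. The main technical obstacle is the gradient estimate on the intermediate region where $\eta_m = (m|x|)^{1/m}$; the precise form of $\eta_m$ is chosen so that the singularity at $0$ is integrable to the $N$-th power and yields the $1/m^{N-1}$ rate rather than a worse one, and I would verify carefully that the constants do not depend on $v \in E$ thanks to the uniform $C^1$ bound.
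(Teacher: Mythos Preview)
Your proposal is correct and follows essentially the same approach as the paper: estimate the numerator via the $C^1$-boundedness of $E$ near $0$ together with the key bound $\int_{B_{1/m}(0)} |\nabla \eta_m|^N\, dx = O(1/m^{N-1})$, estimate the denominator using $E \subset \Psi^{\lambda_k(N)}$, and deduce (ii) and (iii) by monotonicity of the index and the odd continuous map $v \mapsto \pi(v_m)$. The only cosmetic difference is that the paper expands $|\nabla v_m|^N$ by the binomial formula and bounds each $\int |\nabla \eta_m|^j\, dx$ for $j=0,\dots,N$, whereas you use the cruder convexity inequality $(a+b)^N \le 2^{N-1}(a^N+b^N)$ and need only the $j=N$ case; both give the same $1+C/m^{N-1}$ bound.
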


\begin{proof}
Let $v \in E$ and let $w = \pi(v_m)$. We have
\[
\int_\Omega |\nabla v_m|^N\, dx \le \int_{\Omega \setminus B_{1/m}(0)} |\nabla v|^N\, dx + \sum_{j=0}^N \binom{N}{j} \int_{B_{1/m}(0)} \eta_m^{N-j}\, |\nabla v|^{N-j}\, |v|^j\, |\nabla \eta_m|^j\, dx.
\]
Since $E$ is bounded in $C^1(B_{1/m}(0))$, $\nabla v$ and $v$ are bounded in $B_{1/m}(0)$. Clearly, $\eta_m \le 1$, and a direct calculation shows that
\[
\int_{B_{1/m}(0)} |\nabla \eta_m|^j\, dx \le \frac{C}{m^{N-1}}, \quad j = 0,\dots,N.
\]
Since $E_m \subset \M$, it follows that
\[
\int_\Omega |\nabla v_m|^N\, dx \le 1 + \frac{C}{m^{N-1}}.
\]
Next
\[
\int_\Omega |v_m|^N\, dx \ge \int_{\Omega \setminus B_{1/m}(0)} |v|^N\, dx = \int_\Omega |v|^N\, dx - \int_{B_{1/m}(0)} |v|^N\, dx \ge \frac{1}{\lambda_k(N)} - \frac{C}{m^N}
\]
since $E \subset \Psi^{\lambda_k(N)}$. So
\[
\Psi(w) = \frac{\dint_\Omega |\nabla v_m|^N\, dx}{\dint_\Omega |v_m|^N\, dx} \le \lambda_k(N) + \frac{C}{m^{N-1}}
\]
if $m$ is sufficiently large. Taking $m$ so large that $\lambda_k(N) + C/m^{N-1} < \lambda_{k+1}(N)$ then gives \ref{Lemma 3.3.ii}. Since $E_m \subset \M \setminus \Psi_{\lambda_{k+1}(N)}$ by \ref{Lemma 3.3.ii},
\[
i(E_m) \le i(\M \setminus \Psi_{\lambda_{k+1}(N)}) = k
\]
by the monotonicity of the index and \eqref{1.5}. On the other hand, since $E \to E_m,\, v \mapsto \pi(v_m)$ is an odd continuous map,
\[
i(E_m) \ge i(E) = k.
\]
So $i(E_m) = k$.
\end{proof}

We are now ready to prove Theorem \ref{Theorem 1.2}.

\begin{proof}[\bf Proof of Theorem \ref{Theorem 1.2}]
We have $\lambda_k(N) < \lambda < \lambda_{k+1}(N)$ for some $k \in \N$. Fix $\lambda_k(N) < \lambda' < \lambda$ and $m$ so large that the conclusions of Lemma \ref{Lemma 3.3} hold with $\lambda_k(N) + C/m^{N-1} \le \lambda'$, in particular,
\begin{equation} \label{3.8}
\Psi(w) \le \lambda' \quad \forall w \in E_m.
\end{equation}
Then take $A_0 = E_m$ and $B_0 = \Psi_{\lambda_{k+1}(N)}$, and note that $A_0$ and $B_0$ are disjoint nonempty closed symmetric subsets of $\M$ such that
\[
i(A_0) = i(\M \setminus B_0) = k
\]
by Lemma \ref{Lemma 3.3} \ref{Lemma 3.3.iii} and \eqref{1.5}. Now let $R > r > 0$ and let $A$ and $B$ be as in Theorem \ref{Theorem 1.5}.

First we show that $\inf \Phi(B) > 0$ if $r$ is sufficiently small. Since $e^t \le 1 + te^t$ for all $t > 0$,
\[
F(t) \le \frac{|t|^N}{N} + t_+^\mu\, e^{\, t_+^{N'}} \quad \forall t \in \R,
\]
where $\mu = N + N' > N$. So for $u \in \Psi_{\lambda_{k+1}(N)}$,
\begin{multline*}
\Phi(ru) \ge \int_\Omega \left[\frac{r^N}{N}\, |\nabla u|^N - \frac{\lambda r^N}{N}\, |u|^N - \lambda r^\mu\, u_+^\mu\, e^{\, r^{N'}\! u_+^{N'}}\right] dx\\[10pt]
\ge \frac{r^N}{N} \left(1 - \frac{\lambda}{\lambda_{k+1}(N)}\right) - \lambda r^\mu\, \bigg(\int_\Omega e^{\, 2r^{N'}\! u_+^{N'}} dx\bigg)^{1/2} \pnorm[2 \mu]{u_+}^\mu.
\end{multline*}
If $2\, r^{N'} \le \alpha_N$, then
\[
\int_\Omega e^{\, 2r^{N'}\! u_+^{N'}} dx \le \int_\Omega e^{\, \alpha_N\, u_+^{N'}} dx,
\]
which is bounded by \eqref{1.3}. Since $W^{1,N}_0(\Omega) \hookrightarrow L^{2 \mu}(\Omega)$ and $\lambda < \lambda_{k+1}(N)$, it follows that $\inf \Phi(B) > 0$ if $r$ is sufficiently small.

Since $e^t \ge 1 + t$ for all $t > 0$,
\begin{equation} \label{3.9}
F(t) \ge \frac{|t|^N}{N} + \frac{t_+^\mu}{\mu} \quad \forall t \in \R,
\end{equation}
so for all $w \in E_m$ and $t \ge 0$,
\begin{equation} \label{3.10}
\Phi(tw) \le \int_\Omega \left[\frac{t^N}{N}\, |\nabla w|^N - \frac{\lambda t^N}{N}\, |w|^N\right] dx = \frac{t^N}{N} \left(1 - \frac{\lambda}{\Psi(w)}\right) \le - \frac{t^N}{N} \left(\frac{\lambda}{\lambda'} - 1\right) \le 0
\end{equation}
by \eqref{3.8}.

Next we show that
\[
\sup_{w \in E_m,\; s,\, t \ge 0}\, \Phi(sw + tv_0) < \frac{\alpha_N^{N-1}}{N}
\]
for a suitably chosen $v_0 \in \M \setminus E_m$. Let
\[
v_j(x) = \frac{1}{\omega_{N-1}^{1/N}}\, \begin{cases}
(\log j)^{(N-1)/N} & \text{if } |x| \le 1/j\\[10pt]
\dfrac{\log |x|^{-1}}{(\log j)^{1/N}} & \text{if } 1/j < |x| \le 1\\[10pt]
0 & \text{if } |x| > 1.
\end{cases}
\]
Then $v_j \in W^{1,N}(\R^N)$, $\norm{v_j} = 1$, and $\pnorm[N]{v_j}^N = \O(1/\log j)$ as $j \to \infty$. We take
\[
v_0(x) = \widetilde{v}_j(x) := v_j\!\left(\frac{x}{r_m}\right)
\]
with $r_m = 1/2\, m^{m+1}$ and $j$ sufficiently large. Since $B_{r_m}(0) \subset \Omega$, $\widetilde{v}_j \in W^{1,N}_0(\Omega)$ and $\norm{\widetilde{v}_j} = 1$. For sufficiently large $j$,
\[
\Psi(\widetilde{v}_j) = \frac{1}{r_m^N\, \pnorm[N]{v_j}^N} \ge \lambda
\]
and hence $\widetilde{v}_j \notin E_m$ by \eqref{3.8}. For $w \in E_m$ and $s,\, t \ge 0$,
\[
\Phi(sw + t \widetilde{v}_j) = \Phi(sw) + \Phi(t \widetilde{v}_j)
\]
since $w = 0$ on $B_{r_m}(0)$ and $\widetilde{v}_j = 0$ on $\Omega \setminus B_{r_m}(0)$. Since $\Phi(sw) \le 0$ by \eqref{3.10}, it suffices to show that
\[
\sup_{t \ge 0}\, \Phi(t \widetilde{v}_j) < \frac{\alpha_N^{N-1}}{N}
\]
for arbitrarily large $j$. Since $\Phi(t \widetilde{v}_j) \to - \infty$ as $t \to \infty$ by \eqref{3.9}, there exists $t_j \ge 0$ such that
\begin{equation} \label{3.11}
\Phi(t_j \widetilde{v}_j) = \frac{t_j^N}{N} - \lambda \int_{B_{r_m}(0)} F(t_j \widetilde{v}_j)\, dx = \sup_{t \ge 0}\, \Phi(t \widetilde{v}_j)
\end{equation}
and
\begin{equation} \label{3.12}
\Phi'(t_j \widetilde{v}_j)\, \widetilde{v}_j = t_j^{N-1} \left(1 - \lambda \int_{B_{r_m}(0)} \widetilde{v}_j^N e^{\, t_j^{N'} \widetilde{v}_j^{N'}} dx\right) = 0.
\end{equation}
Suppose $\Phi(t_j \widetilde{v}_j) \ge \alpha_N^{N-1}/N$ for all sufficiently large $j$. Since $F(t) \ge 0$ for all $t \in \R$, then \eqref{3.11} gives $t_j^{N'} \ge \alpha_N$, and then \eqref{3.12} gives
\begin{multline*}
\frac{1}{\lambda} = \int_{B_{r_m}(0)} \widetilde{v}_j^N e^{\, t_j^{N'} \widetilde{v}_j^{N'}} dx \ge \int_{B_{r_m}(0)} \widetilde{v}_j^N e^{\, \alpha_N\, \widetilde{v}_j^{N'}} dx\\[10pt]
= r_m^N \int_{B_1(0)} v_j^N e^{\, \alpha_N\, v_j^{N'}} dx \ge r_m^N \int_{B_{1/j}(0)} v_j^N e^{\, \alpha_N\, v_j^{N'}} dx = \frac{r_m^N}{N}\, (\log j)^{N-1},
\end{multline*}
which is impossible for large $j$.

Now we show that $\Phi \le 0$ on $A$ if $R$ is sufficiently large. In view of \eqref{3.10}, it only remains to show that $\Phi(Ru) \le 0$ for $u = \pi((1 - t)\, w + tv_0),\, w \in E_m,\, 0 \le t \le 1$. Since
\[
\norm{(1 - t)\, w + tv_0} \le (1 - t) \norm{w} + t \norm{v_0} = 1
\]
and $w$ and $v_0$ are supported on disjoint sets, we have
\begin{equation} \label{3.13}
\pnorm[N]{u}^N = \frac{\pnorm[N]{(1 - t)\, w + tv_0}^N}{\norm{(1 - t)\, w + tv_0}^N} \ge (1 - t)^N \pnorm[N]{w}^N + t^N \pnorm[N]{v_0}^N \ge \frac{(1 - t)^N}{\Psi(w)} \ge \frac{(1 - t)^N}{\lambda'}
\end{equation}
by \eqref{3.8}, and
\begin{equation} \label{3.14}
\pnorm[\mu]{u_+}^\mu = \frac{\pnorm[\mu]{[(1 - t)\, w + tv_0]_+}^\mu}{\norm{(1 - t)\, w + tv_0}^\mu} \ge (1 - t)^\mu \pnorm[\mu]{w_+}^\mu + t^\mu \pnorm[\mu]{v_0}^\mu \ge t^\mu \pnorm[\mu]{v_0}^\mu.
\end{equation}
By \eqref{3.9}, \eqref{3.13}, and \eqref{3.14},
\[
\Phi(Ru) \le \frac{R^N}{N} \norm{u}^N - \frac{\lambda R^N}{N} \pnorm[N]{u}^N - \frac{R^\mu}{\mu} \pnorm[\mu]{u_+}^\mu \le - \frac{1}{N} \left[\frac{\lambda}{\lambda'}\, (1 - t)^N - 1\right] R^N - \frac{1}{\mu} \pnorm[\mu]{v_0}^\mu\, t^\mu R^\mu.
\]
The last expression is clearly nonpositive if $t \le 1 - (\lambda'/\lambda)^{1/N} =: t_0$. For $t > t_0$, it is nonpositive if $R$ is sufficiently large.

The inequalities \eqref{1.6} now imply that $0 < c < \dfrac{\alpha_N^{N-1}}{N}$. If $\Phi$ has no \C{c} sequences, then $\Phi$ satisfies the \C{c} condition trivially and hence $c$ is a critical value of $\Phi$ by Theorem \ref{Theorem 1.5}. If $\Phi$ has a \C{c} sequence, then a subsequence converges weakly to a nontrivial critical point of $\Phi$ by Lemma \ref{Lemma 3.2}.
\end{proof}

\textbf{Acknowledgement:} This work was completed while the first- and second-named authors were visiting the Academy of Mathematics and Systems Science at the Chinese Academy of Sciences, and they are grateful for the kind hospitality of the host institution.

\def\cdprime{$''$}

\end{document}